\newtheorem{theorem}{Theorem}[section]
\newtheorem{proposition}[theorem]{Proposition}
\newtheorem{corollary}[theorem]{Corollary}
\theoremstyle{definition}
\newtheorem{definition}[theorem]{Definition}
\newtheorem{example}[theorem]{Example}
\theoremstyle{remark}
\newtheorem{remark}[theorem]{Remark}
\numberwithin{equation}{section}
\newcommand{\R}{\mathbb{R}}
\newcommand{\N}{\mathbb{N}}
\newcommand{\dt}{\delta}
\newcommand{\markthis}[3]{
  \overset{
    \textup{\makebox[0pt]{#1}}%
    \def\@currentlabel{#1}%
    \ltx@label{#2}%
  }{
    #3%
  }%
}
\DeclareMathOperator{\dist}{dist\,}
\def\cantor{\mathfrak{C}}
\newcommand{\nn}[1]{{\left\vert\kern-0.25ex\left\vert\kern-0.25ex\left\vert #1 
		\right\vert\kern-0.25ex\right\vert\kern-0.25ex\right\vert}}
\renewcommand{\geq}{\geqslant}
\renewcommand{\leq}{\leqslant}
\newcommand{\Free}{{\mathcal F}}
\newcommand{\Lip}{{\mathrm{Lip}}_0}
\newcommand{\eps}{\varepsilon}
\newcounter{smallromans}
\renewcommand{\tocsection}[3]{%
	\indentlabel{\@ifnotempty{#2}{\bfseries\ignorespaces#1 #2\quad}}\bfseries#3}
\renewcommand{\tocsubsection}[3]{%
	\indentlabel{\@ifnotempty{#2}{\ignorespaces#1 #2\quad}}#3}
\newcommand\@dotsep{4.5}
\def\@tocline#1#2#3#4#5#6#7{\relax
	\ifnum #1>\c@tocdepth 
	\else
	\par \addpenalty\@secpenalty\addvspace{#2}%
	\begingroup \hyphenpenalty\@M
	\@ifempty{#4}{%
		\@tempdima\csname r@tocindent\number#1\endcsname\relax
	}{%
		\@tempdima#4\relax
	}%
	\parindent\z@ \leftskip#3\relax \advance\leftskip\@tempdima\relax
	\rightskip\@pnumwidth plus1em \parfillskip-\@pnumwidth
	#5\leavevmode\hskip-\@tempdima{#6}\nobreak
	\leaders\hbox{$\m@th\mkern \@dotsep mu\hbox{.}\mkern \@dotsep mu$}\hfill
	\nobreak
	\hbox to\@pnumwidth{\@tocpagenum{\ifnum#1=1\bfseries\fi#7}}\par
	\nobreak
	\endgroup
	\fi}
\renewcommand\csname r@tocindent0\endcsname{0pt}
\def\l@subsection{\@tocline{2}{0pt}{2.5pc}{5pc}{}}
\begin{document}

	\title[SSD points in Lipschitz-free]{On the strongly subdifferentiable points in Lipschitz-free spaces}

 \dedicatory{Dedicated to Professor William B. Johnson on the occasion of his 80th birthday.}

		\author[Cobollo]{Christian Cobollo}
	\address[Cobollo]{Universitat Politècnica de València. Instituto Universitario de Matemàtica Pura y Aplicada, Camino de Vera, s/n 46022 Valencia, Spain \newline
		\href{http://orcid.org/0000-0000-0000-0000}{ORCID: \texttt{0000-0002-5901-5798} }}
	\email{\texttt{chcogo@upv.es}}

\author[Dantas]{Sheldon Dantas}
\address[Dantas]{Department of Mathematical Analysis, University of Granada, E-18071 Granada, Spain. \newline
	\href{https://orcid.org/0000-0001-8117-3760}{ORCID: \texttt{0000-0001-8117-3760}}}
\email{\texttt{sheldon.dantas@ugr.es}}

	\author[H\'ajek]{Petr H\'ajek}
\address[H\'ajek]{Department of Mathematics\\Faculty of Electrical Engineering\\Czech Technical University in Prague\\Technick\'a 2, 166 27 Prague 6\\ Czech Republic \newline \href{https://orcid.org/0000-0002-1714-3142}{ORCID: \texttt{0000-0002-1714-3142}}}
\email{\texttt{hajekpe8@fel.cvut.cz}}

	\author[Jung]{Mingu Jung}
\address[Jung]{June E Huh Center for Mathematical Challenges, Korea Institute for Advanced Study, 02455 Seoul, Republic of Korea \newline
\href{http://orcid.org/0000-0003-2240-2855}{ORCID: \texttt{0000-0003-2240-2855} }}
\email{\texttt{jmingoo@kias.re.kr}}

	\begin{abstract} In this paper, we present some sufficient conditions on a metric space $M$ for which every molecule is a strongly subdifferentiable (SSD, for short) point in the Lipschitz-free space $\mathcal{F}(M)$ over $M$. Our main result reads as follows: if $(M,d)$ is a metric space and $\gamma > 0$, then there exists a (not necessarily equivalent) metric $d_{\gamma}$ in $M$ such that every finitely supported element in $\Free(M, d_{\gamma})$ is an SSD point. As an application of the main result, it follows that if $M$ is uniformly discrete and $\eps > 0$ is given, there exists a metric space $N$ and a $(1+\eps)$-bi-Lipschitz map $\phi: M \rightarrow N$ such that the set of all SSD points in $\Free(N)$ is dense.
	\end{abstract}

	\thanks{ }
	
	\subjclass[2020]{Primary 46B20; Secondary 49J50}
	\keywords{Strong subdifferentiability; Lipschitz-free spaces; Fréchet differentiability; Gâteaux differentiability; uniformly discrete spaces}
	
	\maketitle

	\thispagestyle{plain}


\section{Introduction}
We focus our attention on a concept of differentiability of the norm of a Banach space, which is called \textit{strong subdifferentiability} (SSD, for short). To put it simply, studying when the norm of a Banach space is strongly subdifferentiable is essentially the same as investigating what is lacking for a Gâteaux differentiable norm to become a Fréchet differentiable norm. In fact, SSD is a strictly weaker concept than Fréchet differentiability (indeed, every point of a finite-dimensional Banach space is SSD). More precisely, an element $x \in X$ is an SSD point if the limit of the expression $\frac{\|x+th\| - \|x\|}{t}$ exists uniformly in $h$ as $t \rightarrow 0^+$. To give the interested reader a sense of what is known about SSD in classical Banach spaces, we present some relevant results in this area. In what follows, we say that a Banach space $X$ is SSD when {\it every} point of its unit sphere is an SSD point. Every uniformly smooth Banach space is SSD since its norm is uniformly Fréchet differentiable on the unit sphere, and if a dual space $X^*$ is SSD, then $X$ must be reflexive (see \cite[Theorem~ 3.3]{FP}) which implies in particular that neither $\ell_1$ nor $\ell_\infty$ are SSD. On the other hand, if $X$ is a predual of a Banach space with the $w^*$-Kadec--Klee property, then $X$ is SSD (see \cite[Proposition~2.6]{DKLM}); hence $c_0$ is an example of a non-reflexive SSD Banach space. It is well-known that a Banach space admitting a Fr\'echet differentiable norm is an Asplund space. In fact, it turns out that the same implication holds for a Banach space admitting an SSD renorming---see \cite{FP, GMZ}. 
However, one of the important distinctions between Fr\'echet differentiability and SSD consists in the fact that the set of Fr\'echet differentiability points is always $G_\delta$ while the set of SSD points is an $F_{\sigma \delta}$ set (cf. \cite[Proposition 5]{GMZ}). In particular, if the set of points of Fr\'echet differentiability is dense, then it is automatically $G_\delta$ dense, which is not true for SSD points. Indeed, there is an example of Banach space, where the set of SSD points is both dense and meager (e.g., SSD points in $\ell_1$ are nothing but finitely supported elements as observed in \cite[Example 1.1]{GGS} and \cite[Theorem 6]{F}); so it cannot be residual. Let us also mention that any separable non-Asplund space (i.e., separable spaces whose dual is not separable)  admits an equivalent norm which is nowhere SSD except at the origin (see \cite[Theorem 1]{GMZ} and \cite[Thm III.1.9 and Prop.III.4.5]{DGZ}), and for every space with a fundamental biorthogonal system, there exists a dense subspace with an SSD norm---we send the reader to \cite{DHR, DHR1} for more information in this line of research. 



\vspace{0.2cm}

For a comprehensive study of SSD, we recommend consulting \cite{DGZ, FP}, along with a recent monograph \cite{GMZ22}. Additionally, we suggest the following references \cite{AOPR, Contreras, CP, DJMazR, F, GGS, Godefroy, GMZ, Gregory} for insights into this theory across various Banach spaces and \cite{FHHMZ} for the basic theory of Banach spaces.

\vspace{0.2cm}

Here, our focus lies in exploring the presence of SSD points within Lipschitz-free spaces over certain metric spaces $M$. It is important to note that the existence of Fréchet differentiable points in $\Free(M)$ relies on the boundedness and uniform discreteness of $M$ (see \cite{BLR} and, more generally, \cite[Theorem 4.3]{PRZ}). We will observe a significant change in this context concerning SSD points (cf. Corollary \ref{main}). On a different note, Aliaga and Rueda Zoca demonstrated that within a bounded uniformly discrete metric space $M$, a finitely supported element $\mu \in S_{\Free(M)}$ is Fréchet differentiable if and only if it is Gâteaux differentiable. They obtained such an equivalence by leveraging a geometric condition on the pairs $(x_i, y_i)$ involved in an optimal representation of $\mu=\sum_{i=1}^n \lambda_i m_{x_i,y_i}$ (cf. \cite[Theorem 3.5]{AR}). A similar result holds true for SSD and Fréchet differentiability points (cf. Proposition \ref{Prop:unifdiscrete}) when focusing on the Lipschitz-free space over $\cup_n [x_n, x_{n+1}]$, where the element is of the form $\mu = \sum_n a_n m_{x_n, x_{n+1}}$ for some $\{x_n\}_n \subseteq M$ containing $0$ and $\sum_n a_n = 1$ with $a_n > 0$.

\vspace{0.2cm}

As we have already mentioned some of our results, let us now present the contents of this manuscript in a more systematic manner. In the next section, we briefly present the necessary background on Lipschitz-free spaces and SSD points. The third (and final) section of the paper is divided into three smaller parts, where we study the following topics. First, we exhibit examples of when SSD points can be obtained in $\Free(M)$ for certain metric spaces $M$. Specifically, as we briefly mentioned earlier, we prove that for any metric space $M$, an element of the form $\mu = \sum_n a_n m_{x_n, x_{n+1}}$ with $\sum_n a_n = 1$ is an SSD point in $\Free(\cup_n [x_n, x_{n+1}])$ if and only if it is Fréchet differentiable in this space. 
Next, we observe that a Lipschitz-free space $\Free(M)$ is SSD if and only if $\Free(M)$ is finite-dimensional (cf. Proposition \ref{inf-dim}), so infinite dimensional Lipschitz-free spaces are forced to have points failing the SSD property. Then, in Subsection 3.2, we define locally uniformly non-aligned metric spaces. This enables us to prove that every molecule is an SSD point in $\Free(M)$, provided $M$ is a metric space of this kind. 

Finally, in the last subsection, we show that for any metric space $(M,d)$ and any $\gamma > 0$, it is possible to construct an associated locally uniformly non-aligned metric space $(M,d_\gamma)$, by defining the metric $d_{\gamma}$ over the same set $M$ as 
\[
d_\gamma (x,y) := 
\begin{cases}
d(x,y) + \gamma & \mbox{ if } x, y \in M \text{ with } x \neq y; \\ 
0 & \mbox{ if } x = y. 
\end{cases} 
\]
Let us mention that the metric $d_\gamma$ is not equivalent to the metric $d$, except in specific cases, such as when $(M,d)$ is a uniformly discrete metric space (i.e., $\inf\{d(x,y) : x\neq y \in M\} > 0$). Moreover, the structure of $\mathcal{F}(M, d_\gamma)$ may differ significantly from that of $\mathcal{F}(M,d)$. For instance, $\mathcal{F}(M,d_\gamma)$ has the Schur property and the Radon-Nikod\'ym property since $(M,d_\gamma)$ is uniformly discrete, which may not hold for $\mathcal{F}(M,d)$ particularly when $(M,d)$ contains a line segment (see \cite{AGPP, Kalton04}).

We conclude the paper by presenting our main result (cf. Theorem \ref{main1}), showing that every finitely supported element in $\Free(M, d_{\gamma})$ is an SSD point---in particular, deducing that the set of all SSD points in $\Free (M, d_\gamma)$ is dense, see Corollary \ref{main}. As a concrete application of this, any Lipschitz-free over a uniformly discrete metric is isomorphic to another Lipschitz-free space whose SSD points are a dense set---see Corollary \ref{cor:biLipschitz}.

\section{Preliminaries}

This section is dedicated to introducing the main tools we will be using throughout the paper. We start by presenting the definitions and main properties we will be using about Lipschitz-free spaces, and then we proceed to discuss SSD points, which form the central concept of the paper. Throughout the paper, we consider only {\it real} Banach spaces and denote by $B_X$ and $S_X$ the unit ball and unit sphere of a Banach space $X$, respectively.


\subsection{Lipschitz-free spaces} 
A \textit{pointed metric space} is just a metric space $M$ in which we distinguish an element, called 0.
Given a pointed metric space $M$ with distinguished point $0$, we write $\Lip(M)$ to denote the Banach space of all Lipschitz functions $f: M \longrightarrow \R$ which vanish at the distinguished point $0$. This space is endowed with the Lip-norm 
\begin{equation*}
    \|f\|_{\text{\rm Lip}} := \sup \left\{ \frac{|f(x) - f(y)|}{d(x,y)}: x, y \in M, x \not= y \right\}.
\end{equation*}
The canonical isometric embedding of $M$ into $\Lip(M)^*$ is denoted by $\dt$ and defined by $\langle f, \dt(x) \rangle = f(x)$ for all $x \in M$ and all $f \in \Lip(M)$. We denote by $\Free(M)$ the norm-closed linear space of $\dt(M)$ in the dual space $\Lip(M)^*$, which is usually called the \textit{Lipschitz-free space over $M$.}

A \textit{molecule} in $\Free(M)$ (for some authors, an \textit{elementary molecule}) is an element of the form 
\begin{equation*}
m_{x,y} := \frac{\dt (x) - \dt (y)}{d(x,y)}
\end{equation*}
where $x, y \in M$ are such that $x\not=y$. When $\mu$ is a finite sum of molecules, we have that it has an optimal representation \cite[Proposition 3.16]{Weaver}. In other words, we can write $\mu$ as follows
\begin{equation*}
    \mu = \sum_{i=1}^{n} \lambda_i m_{x_i, y_i}, \ \ \ \mbox{where} \ \ \  \lambda_i > 0 \ \ \ \mbox{with} \ \ \ \|\mu\| = \sum_{i=1}^n \lambda_i .
\end{equation*}
 It is easy to check and well known that for every $\mu \in S_{\Free (M)}$ and $\eps >0$, there exist $x_n \neq y_n \in M$ and $(a_n) \in \ell_1$ with $a_n>0$ such that 
\begin{equation*}\label{eq:mu}
\mu = \sum_{n} a_n m_{x_n,y_n} 
\end{equation*}
and $\sum_{n} a_n < 1 + \eps$. We send the reader to the survey \cite{Godefroy1} and the monograph \cite{Weaver} for a complete background.


Finally, recall that a metric space $M$ is called \textit{uniformly discrete} when $\inf \{ d(x, y) : x \neq y \in M \} > 0$.

\subsection{Strong subdifferentiable points} The norm $\| \cdot \|$ of a Banach space $X$ is \textit{strongly subdifferentiable} (SSD, for short) at $x \in S_X$ when the one-sided limit 
\begin{equation*} 
 \lim_{t \rightarrow 0^+} \frac{\|x + th\| - 1}{t}
\end{equation*}
exists uniformly in $h \in B_X$. For simplicity, we shall say that $x \in S_X$ is an SSD point if the norm of $X$ is SSD at $x$. When every $x\in S_X$ is an SSD point, $X$ will be said to be SSD.



From the very definitions, one can realize immediately that a norm of a Banach space $X$ is Fréchet differentiable at a point $x \in S_X$ if and only if it is both Gâteaux and SSD at this point. Also, it is well known in the field that \v Smulyan's Lemma provides a geometric interpretation of G\^ateaux and Fréchet differentiability of the norm. That is, a point $x\in S_X$ is a G\^ateaux differentiability point (resp., Fréchet differentiability point) if and only if the set $D(x):= \{ x^* \in S_{X^*}: x^*(x) = 1 \}$ is a singleton (resp., $D(x)\in S_{X^*}$ being a strongly exposed point by $x$)---see, for instance, \cite[Section 7.2]{FHHMZ}. In the same spirit, Franchetti and Payá in \cite{FP} provided a type of \v Smulyan lemma holds for SSD points by showing a point $x \in S_X$ is an SSD point if and only if $x$ strongly exposes the set $D(x):= \{ x^* \in S_{X^*}: x^*(x) = 1 \}$, i.e., given $\eps >0$, there exists $\eta(\eps,x)>0$ such that whenever $x^* (x) > 1-\eta(\eps,x)$ for some $x^* \in B_{X^*}$, we have that $\dist(x^*, D(x)) < \eps$ (see \cite[Theorem 1.2]{FP}). We will be using this geometric characterization for SSD points without any explicit reference throughout.

\section{Main results}

In this section, we start by presenting some introductory results as examples and remarks on $\Free(M)$ for some metric spaces $M$. 


\subsection{Some examples and remarks} Recall that, for a Banach space $X$, a point $x \in S_X$ is a point of Gâteaux differentiability if and only if there exists a unique $f \in S_{X^*}$ such that $f(x) = 1$. Let us recall also that given two points $x,y \in M$, we define the metric segment between them as the set 
\begin{equation*}
    [x,y] := \{ z \in M: d(x,z) + d(y,z) = d(x,y) \}. 
\end{equation*}

The next result states that an element, which is roughly speaking a convex series of ``consecutive'' molecules (that is, a convex sum of molecules are specifically of the form $m_{x_n, x_{n+1}}$) is a point of Gâteaux differentiability in the Lipschitz-free over the union of segments determined by the representation of the given element. Although the proof is straightforward, we include it for completeness. In what follows, the subsets of a given metric space are equipped with the restriction of the metric of that space.


\begin{proposition} \label{small-lemma} Let $M$ be a pointed metric space and $I$ be an index set. If $\mu = \sum_{n \in I} a_n m_{x_n,x_{n+1}} \in S_{\Free (M)}$ for some $\{x_n : n \in I\} \subseteq M$ containing $0$ and $a_n>0$ with $\sum_{n \in I} a_n=1$, then $\mu$ is a point of Gâteaux differentiability in $\Free(\cup_{n \in I} [x_n,x_{n+1}])$.
\end{proposition}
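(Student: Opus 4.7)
The plan is to invoke \v{S}mulyan's lemma in its G\^ateaux form: the point $\mu \in S_{\Free(N)}$, where $N := \cup_{n \in I}[x_n, x_{n+1}]$, is a G\^ateaux differentiability point if and only if the face $D(\mu) = \{f \in S_{\Lip(N)} : \langle f, \mu \rangle = 1\}$ is a singleton. So I would take an arbitrary $f \in D(\mu)$ and show that its values on $N$ are forced.

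First I would exploit the explicit representation. Since each molecule satisfies $\|m_{x_n,x_{n+1}}\|=1$, we have $\langle f, m_{x_n,x_{n+1}}\rangle \leq 1$ for every $n\in I$. Combined with the convex combination
\begin{equation*}
1 = \langle f, \mu\rangle = \sum_{n\in I} a_n \langle f, m_{x_n,x_{n+1}}\rangle, \qquad a_n > 0, \ \sum_n a_n = 1,
\end{equation*}
this forces $\langle f, m_{x_n,x_{n+1}}\rangle = 1$ for every $n \in I$, i.e.\ $f(x_n) - f(x_{n+1}) = d(x_n, x_{n+1})$. Starting from $f(0)=0$ (with $0 = x_{n_0}$ for some index $n_0 \in I$), this recurrence propagates uniquely along the index set and pins down $f$ on the whole countable set $\{x_n : n\in I\}$.

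Next I would extend the uniqueness to every point of the union of segments. Fix $n\in I$ and an arbitrary $z \in [x_n, x_{n+1}]$, so by definition $d(x_n, z) + d(z, x_{n+1}) = d(x_n, x_{n+1})$. Using $\|f\|_{\Lip} \leq 1$ twice,
\begin{equation*}
d(x_n, x_{n+1}) = f(x_n) - f(x_{n+1}) = \bigl(f(x_n) - f(z)\bigr) + \bigl(f(z) - f(x_{n+1})\bigr) \leq d(x_n, z) + d(z, x_{n+1}),
\end{equation*}
and equality throughout forces both intermediate differences to saturate, giving $f(z) = f(x_n) - d(x_n, z)$. Thus $f$ is completely determined on $N = \cup_{n\in I}[x_n, x_{n+1}]$, proving $|D(\mu)| = 1$ and hence G\^ateaux differentiability.

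There is no real obstacle here: the argument is a clean combination of the convex-combination trick (all pieces must be extremal) with the definition of a metric segment. The only small point to be careful about is indexing — one has to argue that the recurrence $f(x_n) - f(x_{n+1}) = d(x_n,x_{n+1})$ together with $f(0)=0$ propagates through the whole index set $I$, which is immediate once we observe that the $x_n$'s are consecutive and $0$ is among them.
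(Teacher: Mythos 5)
Your proposal is correct and follows essentially the same route as the paper: the convexity argument forcing $\langle f, m_{x_n,x_{n+1}}\rangle = 1$ for every $n$, followed by the observation that norm attainment on a metric segment determines $f$ on that segment, propagating from $f(0)=0$ along the chain. Your write-up merely makes explicit the saturation argument for an arbitrary $z\in[x_n,x_{n+1}]$, which the paper states more briefly.
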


\begin{proof}  If $f \in S_{\Lip (M)}$ satisfies that $\langle f, \mu \rangle =1$, then by using a convexity argument, we have that $\langle f, m_{x_n,x_{n+1}} \rangle =1$ for every $n \in I$. For simplicity, say $I = \{n_1, n_2, \ldots\}$ and $x_{n_1} = 0$. As $f$ attains its Lipschitz constant between $x_{n_1}=0$ and $x_{n_2}$, the value of $f$ on the segment $[x_{n_1}, x_{n_2}]$ is determined, i.e., $f(t)= d(x_{n_1},t)$ for $t \in [x_{n_1}, x_{n_2}]$. From the fixed value of $f(x_{n_2})=d(x_{n_1},x_{n_2})$, we observe that the value of $f$ is also determined on $[x_{n_2}, x_{n_3}]$. In this way, the value of $f$ is uniquely determined on the entire segment $[x_n,x_{n+1}]$ for every $n \in I$, which proves that the finite sum of molecules $\mu$ must be a point of Gâteaux differentiability in $\Free(\cup_{n \in I} [x_n,x_{n+1}])$ as we wanted.
\end{proof}

\begin{proposition} \label{Prop:unifdiscrete} Let $M$ be a pointed metric space and $I$ be an index set. 
Let $\mu = \sum_{n \in I} a_n m_{x_n,x_{n+1}} \in S_{\Free (M)}$ for some $\{x_n : n \in I\} \subseteq M$ containing $0$ and $a_n>0$ with $\sum_{n \in I} a_n=1$. Consider the following statements.
\begin{enumerate}
    \itemsep0.25em
    \item $\mu$ is an SSD point in $\Free(\cup_{n \in I} [x_n,x_{n+1}])$.
    \item $\mu$ is a point of Fréchet differentiability in $\Free(\cup_{n \in I} [x_n,x_{n+1}])$.
    \item $\cup_{n \in I} [x_n,x_{n+1}]$ is uniformly discrete and bounded.
\end{enumerate} 
Then (1) $\Leftrightarrow$ (2) $\Rightarrow$ (3). \end{proposition}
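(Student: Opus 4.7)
My plan is twofold: the equivalence (1) $\Leftrightarrow$ (2) should come essentially for free by combining Proposition \ref{small-lemma} with the classical characterization Fréchet $=$ Gâteaux $+$ SSD recalled in the Preliminaries, while the implication (2) $\Rightarrow$ (3) will be obtained by directly invoking \cite[Theorem 4.3]{PRZ} as already advertised in the Introduction.

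For (1) $\Leftrightarrow$ (2), the key observation is that Proposition \ref{small-lemma} unconditionally guarantees that $\mu$ is a point of Gâteaux differentiability in $\Free(\cup_{n \in I}[x_n, x_{n+1}])$, with no additional hypotheses needed. Combining this with the classical fact stating that a norm is Fréchet differentiable at a given point if and only if it is simultaneously Gâteaux differentiable and SSD at that point, we immediately obtain that, at the specific element $\mu$ under consideration, being SSD and being Fréchet differentiable are equivalent conditions. This takes care of both directions of (1) $\Leftrightarrow$ (2) with essentially no extra work.

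For (2) $\Rightarrow$ (3), I would set $N := \cup_{n \in I}[x_n, x_{n+1}]$ and appeal to \cite[Theorem 4.3]{PRZ}, which asserts that whenever $\Free(N)$ admits at least one point of Fréchet differentiability, the underlying metric space $N$ itself must be both uniformly discrete and bounded. Since (2) provides such a point (namely $\mu$) in $\Free(N)$, both of the conclusions in (3) follow at once.

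The main obstacle, should one wish to avoid the appeal to \cite[Theorem 4.3]{PRZ} and give a fully self-contained proof of (2) $\Rightarrow$ (3), would be the explicit construction, by hand, of $1$-Lipschitz perturbations of the unique norming functional $f^*$ (whose formula was essentially exhibited in the proof of Proposition \ref{small-lemma}) which almost norm $\mu$ yet stay bounded away from $f^*$ in $\Lip$-norm, both when $N$ is unbounded and when $N$ fails to be uniformly discrete. In the first case one would use pairs in $N$ with $d(y_k, z_k) \to \infty$, in the second case pairs with $d(y_k, z_k) \to 0$; in either situation, guaranteeing that the perturbations remain globally $1$-Lipschitz on the entire segment union $N$ is a delicate point, and since the cited general result already performs exactly this task in far greater generality, I would favour the shortcut via \cite[Theorem 4.3]{PRZ}.
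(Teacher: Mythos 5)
Your proposal is correct and follows essentially the same route as the paper: the equivalence (1) $\Leftrightarrow$ (2) is obtained exactly as in the paper by combining Proposition \ref{small-lemma} with the fact that Fréchet differentiability equals Gâteaux differentiability plus SSD, and (2) $\Rightarrow$ (3) is obtained by citing the known result that a Fréchet differentiability point in $\Free(N)$ forces $N$ to be uniformly discrete and bounded. The only cosmetic difference is that the paper's proof cites \cite[Theorem 2.4]{BLR} for this last step while you cite the more general \cite[Theorem 4.3]{PRZ}; both references are offered for this very fact in the paper's introduction, so this is immaterial.
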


\begin{proof} Notice from Proposition \ref{small-lemma} that the element $\mu$ is a point of Gâteaux differentiability in $\Free (\cup_{n \in I} [x_n,x_{n+1}])$. Thus, $\mu$ is an SSD point in $\Free (\cup_{n \in I} [x_n,x_{n+1}])$ if and only if $\mu$ is a point of Fréchet differentiability in $\Free (\cup_{n \in I} [x_n,x_{n+1}])$. This proves that (1) and (2) are equivalent. Now, suppose that $\mu$ is a Fréchet differentiability point in $\Free(\cup_{n \in I} [x_n, x_{n+1}])$. Then, the set $\cup_{n \in I} [x_n,x_{n+1}]$ must be uniformly discrete and bounded due to \cite[Theorem 2.4]{BLR}. 
\end{proof}

Implication (3) $\Rightarrow$ (2) in Proposition \ref{Prop:unifdiscrete} is {\it not} true in general.

\begin{example} \label{ex:ud-noF-branch}
Consider the metric space of the countably branching tree of height one, i.e., $M = \mathbb{N} \cup \{0\}$, where $d(0,n)=1$ for every $n \in \mathbb{N}$ and $d(n,m)=2$ for all $n \neq m \in \mathbb{N}$. The operator $\ell_1$ to $\Free(M)$ given by $\dt(e_n) := \dt(n)$ for every $n \in \mathbb{N}$ is a linear surjective isometry. Note that $\cup_{n \in \mathbb{N}} [n-1, n] = M$ is uniformly discrete and bounded while $\mu = \sum_{n=1}^\infty a_n m_{n-1, n}$ is not an SSD point in $\Free (M)$ since $\Free (M) = \ell_1$ does not have Fréchet differentiability points---see \cite{GMZ}. 
\end{example}

\begin{remark} Let us observe that Proposition \ref{small-lemma} does not apply to a general convex series of molecules. 
Indeed, consider the metric space $M = \{0,1,2,3\}$ in $\R$. Define the element $\mu:= \frac{1}{2} m_{1,0} + \frac{1}{2} m_{3,2}$. Then, the Lipschitz functions $f:=(0,1,2,3)$ and $g:=(0,1,0,1)$ both attain their norms at $\mu$. So, $\mu$ cannot be a Gâteaux differentiability point. We thank Ramón J. Aliaga for pointing this out.
\end{remark}

The next result implies that unless the cardinality of $M$ is finite, the set of SSD points in $\Free(M)$ should differ from the unit sphere, as this would imply that the space $\Free(M)$ is an Asplund space. 


\begin{proposition} \label{inf-dim} Let $M$ be a pointed metric space. The following are equivalent.

\begin{itemize}
\itemsep0.25em
\item[(a)] $\Free(M)$ admits a renorming which is SSD.
\item[(b)] Every renorming of $\Free(M)$ is SSD.
\item[(c)] $\Free(M)$ is finite dimensional.
\end{itemize}
\end{proposition}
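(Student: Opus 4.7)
The implications $(c) \Rightarrow (b)$ and $(b) \Rightarrow (a)$ are straightforward: every finite-dimensional Banach space is SSD (as noted in the introduction), so all renormings of a finite-dimensional $\Free(M)$ are SSD, giving $(c) \Rightarrow (b)$; and $(b) \Rightarrow (a)$ is trivial. The substantive content is $(a) \Rightarrow (c)$.

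For $(a) \Rightarrow (c)$, my plan is to combine two ingredients. First, by the theorem cited in the introduction (see \cite{FP, GMZ}), any Banach space admitting an SSD renorming is Asplund; hence under (a), $\Free(M)$ is Asplund. Second, I would show that if $M$ is infinite, then $\Free(M)$ contains an isomorphic copy of $\ell_1$. Since $\ell_1$ is not Asplund, this contradiction forces $M$ to be finite, and consequently $\Free(M)$ to be finite-dimensional.

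To produce the $\ell_1$-embedding for infinite $M$, I would split into two cases. \emph{Case 1: $M$ admits a non-trivial Cauchy sequence.} Passing to a rapidly shrinking subsequence $(y_n) \subseteq M$ with $d(y_{n+1}, p) \leq \tfrac12 d(y_n, p)$, where $p \in \bar{M}$ is the limit in the completion, the consecutive molecules $w_n := m_{y_n, y_{n+1}}$ lie in $\Free(M)$ and, I claim, form an $\ell_1$-basic sequence. The reason is that for any signs $\eps_n \in \{\pm 1\}$, the signed-sum function $f(y_n) := -\sum_{k<n} \eps_k d(y_k, y_{k+1})$ defined on $\{y_n\}$ has Lipschitz constant bounded by a universal $C$: the rapid shrinking gives $|f(y_n) - f(y_m)| \leq \sum_{k \geq \min(n,m)} d(y_k, y_{k+1}) \lesssim d(y_{\min(n,m)}, p)$, while $d(y_n, y_m) \gtrsim d(y_{\min(n,m)}, p)$ by the reverse triangle inequality. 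McShane-extending $f$ to $M$ (shifted so that the extension vanishes at $0_M$) then yields $\|\sum c_n w_n\| \geq C^{-1} \sum |c_n|$, the upper $\ell_1$-bound being automatic by the triangle inequality. \emph{Case 2: $M$ has no non-trivial Cauchy sequence.} A standard extraction argument (a sequence with no Cauchy subsequence is not totally bounded, hence admits a $\delta$-separated infinite subsequence for some $\delta > 0$) shows $M$ contains an infinite uniformly discrete subset $A$; by the references \cite{AGPP, Kalton04} cited in the preliminaries, $\Free(A)$ has the Schur property, and by Rosenthal's $\ell_1$-theorem combined with the Schur property, any infinite-dimensional Schur space must contain $\ell_1$. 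Since the McShane extension theorem provides an isometric embedding $\Free(A) \hookrightarrow \Free(M)$, we conclude $\Free(M) \supseteq \ell_1$.

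The main obstacle I anticipate is Case 1, where the universal Lipschitz bound on the sign-realizing function must be verified carefully using the geometric decay of $d(y_n, p)$; moreover, the case analysis requires care to handle the situation when the Cauchy limit $p$ lies outside $M$, in which case only the consecutive molecules $m_{y_n, y_{n+1}}$ (and not the auxiliary $m_{y_n, p}$) are guaranteed to lie in $\Free(M)$.
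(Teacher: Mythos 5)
Your proof is correct and follows the paper's argument exactly: (a) $\Rightarrow$ (c) via ``SSD-renormable $\Rightarrow$ Asplund'' together with the fact that every infinite-dimensional Lipschitz-free space contains an isomorphic copy of $\ell_1$, and (c) $\Rightarrow$ (b) from the SSD-ness of finite-dimensional spaces. The only difference is that the paper simply cites \cite{CDW, HN} for the $\ell_1$-containment, whereas you prove it in-line; your two-case argument (the sign-function/McShane estimate along a geometrically converging sequence, and Schur plus Rosenthal for the uniformly discrete case, using that $\Free(A)$ is Schur for $A$ uniformly discrete) checks out.
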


\begin{proof} Let us prove that (a) implies (c). 
Note that a Lipschitz-free space $\Free(M)$
is an Asplund space only when it is finite-dimensional (that is, the cardinality of $M$ is finite) as any infinite-dimensional Lipschitz-free space contains an isomorphic copy of $\ell_1$ (see \cite{CDW, HN}). For (c) implying (b), notice that every finite-dimensional Banach space is SSD (as a consequence of Dini’s theorem).
\end{proof}

\subsection{Locally uniformly non-aligned metric spaces}

In this subsection, our aim is to introduce a metric space $M$ where \textit{every}  molecule exhibits the property of being an SSD point in $\Free(M)$. For any $x,y,z\in M$, the \textit{Gromov product of $x,y$ with respect to $z$} is defined as 
\begin{equation*} 
G_z(x,y):= d(x,z)+d(z,y)-d(x,y)
\end{equation*} 
(some authors consider the Gromov product as the above value divided by 2---see, for instance, \cite{BH, CCGMRZ}; in our context, this would not make much of a difference).

\begin{definition} Let $M$ be a pointed metric space and $x,y \in M$ with $x\neq y$. We say that the pair $(x,y)$ satisfies \textit{property (G)} if there exists $\eta:= \eta(x,y) > 0$ such that $G_z(x,y) > \eta$ for every $z \in M \setminus \{x,y\}$. The metric space $M$ is said to be \textit{locally uniformly non-aligned} if every pair $(x,y)$ of points in $M$ satisfies property (G). 
\end{definition}

If $(x,y)$ satisfies property (G), then it is clear that $[x,y] = \{x,y\}$. Let us mention that $m_{x,y}$ is an extreme point of $B_{\Free(M)}$ if and only if $G_z(x,y)>0$ for every $z\in M\backslash \{x,y\}$ (see \cite[Theorem 1.1]{AliPer20}). Notice also that a locally uniformly non-aligned metric space does not need to be uniformly discrete (for instance, take $M= \{(1, 0, \ldots, 0, \alpha_n, 0, 0, \ldots): n\geq 2\} \cup \{0\} \subseteq c_0$, where $(\alpha_n)$ is a sequence of positive real numbers converging to $0$).

\begin{example} \label{Petr-example} 
Let $M = \{x=0, y, z_1, z_2, \ldots \}$ be such that $d(x,y) = d(z_n, z_m) = 1$, $d(x,z_n) = \frac{1}{2}$ and $d(y,z_n) = \frac{1}{2} + \varepsilon_n$, where $(\varepsilon_n) \subseteq \mathbb{R}^+$ is such that $\varepsilon_n \rightarrow 0$ as $n \rightarrow \infty$ (see Figure \ref{fig:petrs}). It is clear that $G_{z_n} (x,y) = \eps_n \rightarrow 0$ as $ n \rightarrow \infty$; so $(x,y)$ fails to have property (G). 
\begin{figure}
    \centering
    \includegraphics[width=0.9\linewidth]{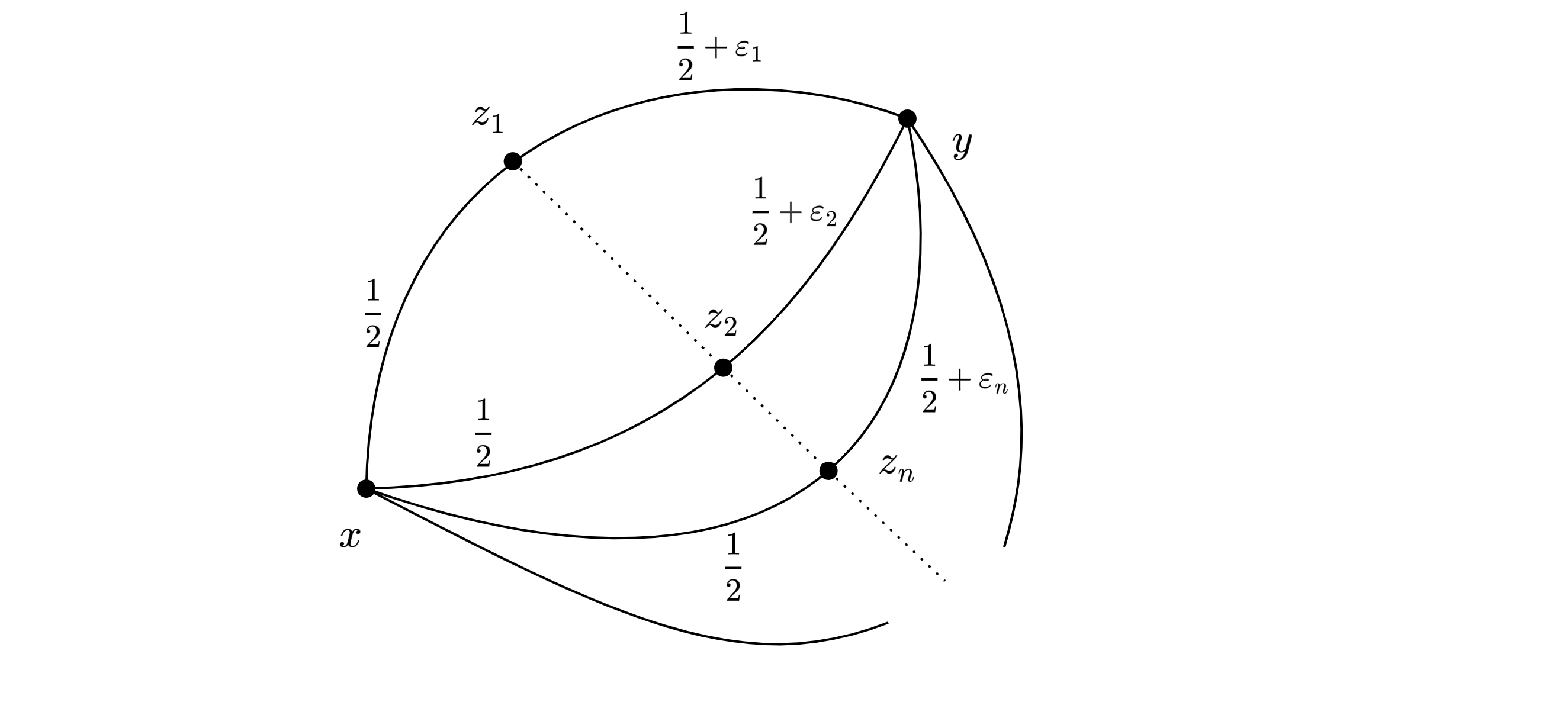}
    \caption{Example \ref{Petr-example}}
    \label{fig:petrs}
\end{figure}
\end{example}

To present a characterization for property (G) of a pair of points $(x,y)$, we introduce the following notion. 

\begin{definition}\label{def:peak}
    Let $M$ be a pointed metric space and $x\neq y \in M$. We say that $f \in S_{\Lip (M)}$ \textit{peaks at} $(x,y)$ if $f(m_{x,y}) =1$ and
    \begin{align*} \label{condition-f-xy}
    &\text{  $\sup \{|f(m_{p,q})|: (p,q) \neq (x,y) \} < \gamma <1$ for some $\gamma >0$.}   
\end{align*}
\end{definition}

Let us point out that our Definition \ref{def:peak} is different from the one introduced in \cite[Definition 5.2]{GPRZ}, while these two concepts coincide when the given points $x$ and $y$ are isolated points. For $x \neq y \in M$, recall also the auxiliary Lipschitz function $f_{x,y} \in S_{\Lip (M)}$ given by 
\begin{equation*} \label{fxy}
f_{x,y}(z):=\dfrac{d(x,y)}{2}  \dfrac{d(z,y)-d(z,x)}{d(z,y)+d(z,x)}
\end{equation*}
(see \cite[Lemma 3.6]{GPRZ} and \cite{IKW}).

\begin{proposition}\label{prop:characterization_G}
    Let $M$ be a pointed metric space and $x \neq y \in M$ be given. The following are equivalent. 
\begin{itemize}
\itemsep0.25em
\item[(1)] The pair $(x,y)$ satisfies property (G).
\item[(2)] The function $f_{x,y}$ peaks at $(x,y)$.
\item[(3)] There exists $f \in S_{\Lip(M)}$ which peaks at $(x,y)$.
\end{itemize}
\end{proposition}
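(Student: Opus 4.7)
The plan is to prove the three-way equivalence cyclically as $(2)\Rightarrow(3)\Rightarrow(1)\Rightarrow(2)$. The step $(2)\Rightarrow(3)$ is immediate: one simply takes $f:=f_{x,y}$ in~(3).

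For $(3)\Rightarrow(1)$, I would fix $f\in S_{\Lip(M)}$ peaking at $(x,y)$ with parameter $\gamma<1$, so that $f(x)-f(y)=d(x,y)$ while $|f(p)-f(q)|\leq \gamma\,d(p,q)$ for every admissible pair $(p,q)\neq(x,y)$. For an arbitrary $z\in M\setminus\{x,y\}$, the trivial decomposition
\[
d(x,y)\;=\;\bigl(f(x)-f(z)\bigr)+\bigl(f(z)-f(y)\bigr)
\]
combined with the $\gamma$-bound on each piece yields $d(x,y)\leq \gamma\bigl(d(x,z)+d(z,y)\bigr)$, which rearranges to
\[
G_z(x,y)\;\geq\;\frac{(1-\gamma)}{\gamma}\,d(x,y).
\]
Since this estimate is independent of $z$, property (G) holds with $\eta:=(1-\gamma)d(x,y)/\gamma$.

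The substantial step is $(1)\Rightarrow(2)$. From \cite[Lemma 3.6]{GPRZ}, we already have $\|f_{x,y}\|_{\Lip}=1$ and $f_{x,y}(m_{x,y})=1$, so I only need the uniform strict inequality. For any $(p,q)\neq(x,y)$ with $p\neq q$, writing $a,b,c,d$ for $d(p,x),d(p,y),d(q,x),d(q,y)$, a direct computation yields the clean formula
\[
f_{x,y}(p)-f_{x,y}(q)\;=\;\frac{d(x,y)\,(bc-ad)}{(a+b)(c+d)}.
\]
The key point is the pair of algebraic identities
\[
bc-ad\;=\;b(c-a)+a(b-d)\;=\;c(b-d)+d(c-a),
\]
which, combined with the reverse triangle inequalities $|c-a|,|b-d|\leq d(p,q)$, give $|bc-ad|\leq \min(a+b,c+d)\cdot d(p,q)$ and hence
\[
|f_{x,y}(m_{p,q})|\;\leq\;\frac{d(x,y)}{\max(a+b,\,c+d)}.
\]

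The main obstacle is then to produce a uniform-in-$(p,q)$ lower bound on $\max(a+b,c+d)$ strictly exceeding $d(x,y)$, and the obvious hurdle is the edge case where one of $p,q$ equals $x$ or $y$ (in which case the corresponding Gromov sum equals $d(x,y)$ exactly). I handle this by a short case split: if both $p,q\notin\{x,y\}$, property (G) applied to each point gives $a+b,c+d>d(x,y)+\eta$; and if, say, $p\in\{x,y\}$, then the constraints $(p,q)\neq(x,y)$ and $p\neq q$ force $q\notin\{x,y\}$, so (G) applied to $q$ still yields $c+d>d(x,y)+\eta$. In either case $\max(a+b,c+d)>d(x,y)+\eta$, so setting $\gamma:=d(x,y)/(d(x,y)+\eta)<1$ produces the required uniform estimate $|f_{x,y}(m_{p,q})|\leq \gamma$, and $f_{x,y}$ peaks at $(x,y)$.
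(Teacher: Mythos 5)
Your proof is correct and takes essentially the same route as the paper: the $(3)\Rightarrow(1)$ step is the identical telescoping estimate, and your $(1)\Rightarrow(2)$ step rests on the same key inequality $|f_{x,y}(m_{p,q})|\leq d(x,y)/\max\{d(p,x)+d(p,y),\,d(q,x)+d(q,y)\}$, which the paper simply asserts (as a standard property of $f_{x,y}$ from \cite{GPRZ,IKW}) while you derive it explicitly via the identity $bc-ad=b(c-a)+a(b-d)=c(b-d)+d(c-a)$. The only structural difference is that the paper also proves $(2)\Rightarrow(1)$ directly, which your cyclic scheme $(2)\Rightarrow(3)\Rightarrow(1)\Rightarrow(2)$ renders unnecessary.
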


\begin{proof}
(1) $\Leftrightarrow$ (2): Suppose that $f_{x,y}$ peaks at $(x,y)$ (witnessed by $0<\gamma<1$) and let $z \in M \setminus \{x,y\}$ be given. Notice first that $f_{x,y}(x) = \frac{d(x,y)}{2}$. Then, 
\begin{align*}
|f_{x,y}(m_{x,z})| = \frac{d(x,y)}{2 d(x,z)}  \left| 1 - \left( \frac{d(z,y) - d(z,x)}{d(z,y) + d(z,x)} \right) \right| = \frac{d(x,y)}{d(z,y) + d(z,x)} < \gamma.
\end{align*}
This implies that 
\begin{align*}
G_z(x,y) = d(x,z) + d(z,y) - d(x,y) > \frac{1}{\gamma} d(x,y) - d(x,y) = \left( \frac{1}{\gamma} - 1 \right) d(x,y).
\end{align*}
If we define $\eta := \left( \frac{1}{\gamma} - 1 \right)  d(x,y) > 0$, we are done. Conversely, let $\eta := \eta (x,y)$ be the one given by property (G) for the pair $(x,y)$. It is clear that $\langle f_{x,y},m_{x,y} \rangle=1$. Now, for any pair $(p,q)\neq (x,y)$, we get
 \begin{align*}
     \langle f_{x,y},m_{p,q} &\rangle \leq \dfrac{d(x,y)}{\max\{d(x,p)+d(p,y), d(x,q)+d(q,y)\}} \leq \dfrac{d(x,y)}{d(x,y)+\eta} < 1;
 \end{align*}
hence $f_{x,y}$ peaks at $(x,y)$.

It remains to show that (3) $\Rightarrow$ (1). Let $f \in S_{\Lip(M)}$ peak at $(x,y)$ with constant $0<\gamma<1$ and let $z \in M \setminus \{x,y\}$. Observe that
\begin{align*}
G_z(x,y) = d(x,z) + d(z,y) - d(x,y) &> \frac{1}{\gamma}(f(x) - f(z)) + \frac{1}{\gamma}(f(z) - f(y)) - d(x,y) \\
&= \frac{1}{\gamma}(f(x) - f(y)) - d(x,y) \\
&= \left( \frac{1}{\gamma} - 1 \right) d(x,y)
\end{align*}
for every $z \in M \setminus \{x,y\}$. This shows that $(x,y)$ satisfies property (G).
\end{proof}


Now we are ready to prove our main interest in introducing the aforementioned property.

\begin{theorem}\label{thm:SSD1}
    Let $M$ be a pointed metric space and $x, y \in M$ with $x \neq y$. If $(x,y)$ has property (G), then the molecule $m_{x,y}$ is an SSD point in $\Free(M)$. 
\end{theorem}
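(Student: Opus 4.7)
The plan is to verify the Franchetti--Pay\'a characterization of SSD points recalled in the preliminaries: one must show that for every $\eps>0$ there exists $\delta>0$ such that every $f\in B_{\Lip(M)}$ with $f(m_{x,y})>1-\delta$ satisfies $\dist(f, D(m_{x,y}))<\eps$, where $D(m_{x,y}) := \{g\in S_{\Lip(M)}: g(m_{x,y})=1\}$.

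By Proposition \ref{prop:characterization_G}, property (G) supplies a peaking function $\tilde f\in S_{\Lip(M)}$ at $(x,y)$: there exists a constant $\gamma\in(0,1)$ with $\tilde f(m_{x,y})=1$ and $\tilde f(m_{p,q})\leq \gamma$ for every ordered pair $(p,q)\neq(x,y)$ (and consequently $|\tilde f(m_{p,q})|\leq\gamma$ whenever $\{p,q\}\neq\{x,y\}$). The idea is to pull a given $f$ toward the face $D(m_{x,y})$ by averaging with $\tilde f$ and then renormalizing.

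For a parameter $t\in(0,1)$ to be chosen, set $g_t:=(1-t)f+t\tilde f\in \Lip(M)$. A direct estimate gives
\[
g_t(m_{x,y}) \geq 1-(1-t)\delta \quad\text{while}\quad g_t(m_{p,q})\leq (1-t) + t\gamma = 1-t(1-\gamma) \text{ for all } (p,q)\neq(x,y).
\]
The balancing choice $t:=\delta/(\delta+1-\gamma)$ equates $(1-t)\delta$ and $t(1-\gamma)$, which forces the Lipschitz norm of $g_t$ to be realized at the molecule $m_{x,y}$: one obtains $\|g_t\|_{\Lip}=g_t(m_{x,y})\in[1-(1-t)\delta,1]$. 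Consequently $h:=g_t/\|g_t\|_{\Lip}$ belongs to $D(m_{x,y})$, since $h(0)=0$ (both $f$ and $\tilde f$ vanish at the basepoint), $\|h\|_{\Lip}=1$ and $h(m_{x,y})=1$.

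The final step is a routine estimate using the triangle inequality:
\[
\|h-f\|_{\Lip} \leq \|h-g_t\|_{\Lip} + \|g_t-f\|_{\Lip} = (1-\|g_t\|_{\Lip}) + t\|\tilde f-f\|_{\Lip} \leq (1-t)\delta + 2t = \frac{(3-\gamma)\delta}{\delta+1-\gamma}.
\]
This is smaller than $\eps$ as soon as $\delta<\eps(1-\gamma)/(3-\gamma)$, completing the verification. The main delicate point is the balancing argument identifying the correct $t$, which depends crucially on the peaking property holding \emph{uniformly} (with a single constant $\gamma<1$) over all pairs distinct from $(x,y)$---this uniformity is exactly what property (G) delivers through Proposition \ref{prop:characterization_G}.
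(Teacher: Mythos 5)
Your argument is correct and is essentially the paper's own proof: both verify the Franchetti--Pay\'a criterion by forming a convex combination of the near-attaining function with the peaking function from Proposition \ref{prop:characterization_G}, using the uniform gap $\gamma<1$ to force the combination to attain its norm at $m_{x,y}$, and then normalizing and estimating the distance. The only difference is bookkeeping---you choose the mixing weight $t$ to balance the two error terms, while the paper fixes the weight as $\varepsilon/4$ and constrains the admissible $\gamma_\varepsilon$ afterwards---and your explicit final bound $(3-\gamma)\delta/(\delta+1-\gamma)$ is, if anything, a tidier way to close the argument.
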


\begin{proof}
By Proposition \ref{prop:characterization_G}, there exists $f \in S_{\Lip(M)}$ such that $f$ peaks at $(x,y)$ with constant $0<\gamma<1$. 
Let $\varepsilon>0$ and take $g\in S_{\Lip(M)}$ to be such that $\langle g,m_{x,y}\rangle>1- \gamma_\varepsilon$, where $\gamma_\varepsilon > 0$ satisfies the following condition
\[
0<\gamma_\varepsilon<  \frac{\eps}{4-\eps} (1-\gamma).
\] 
 Put $h:= \left(1- \dfrac{\varepsilon}{4} \right) g+ \dfrac{\varepsilon}{4}f$ and observe that 
\[\langle h,m_{x,y} \rangle > \left(1-\dfrac{\varepsilon}{4} \right)(1-\gamma_\varepsilon)+\dfrac{\varepsilon}{4}.\]
By the choice of $\gamma_\eps$, we have that for $(p,q)\neq (x,y)$, 
\[|\langle h, m_{p,q} \rangle| \leq \left(1- \dfrac{\varepsilon}{4} \right) + \dfrac{\varepsilon}{4} \gamma< \langle h, m_{x,y} \rangle.\]
It follows that $\|h\|= \langle h,m_{x,y} \rangle > \left(1-\dfrac{\varepsilon}{4} \right)(1-\gamma_\varepsilon)+\dfrac{\varepsilon}{4}$. Finally, considering $\hat h:= \dfrac{h}{\|h\|}$, we get 
\[
    \|\hat h-g\|\leq (1-\|h\|)+ \dfrac{\varepsilon}{2} < 1- \left(1-\dfrac{\varepsilon}{4} \right)(1-\gamma_\varepsilon)+ \dfrac{\varepsilon}{4}.
\]
Now it is enough to take small enough $\varepsilon$ and $\gamma_\varepsilon$. 
\end{proof}

As a direct consequence of Proposition \ref{prop:characterization_G} and Theorem \ref{thm:SSD1}, we have the following corollary.

\begin{corollary} \label{theorem:LUNA} Let $M$ be a pointed metric space. If $M$ is locally uniformly non-aligned, then every molecule is an SSD point in $\Free(M)$.
\end{corollary}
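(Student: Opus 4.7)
The plan is to observe that this corollary is essentially an immediate consequence of the two results that immediately precede it, namely Proposition \ref{prop:characterization_G} and Theorem \ref{thm:SSD1}. By the definition of locally uniformly non-aligned, every pair $(x,y)$ of distinct points in $M$ satisfies property (G). Fixing an arbitrary molecule $m_{x,y} \in \Free(M)$, I would simply invoke Theorem \ref{thm:SSD1} applied to this particular pair $(x,y)$, which yields directly that $m_{x,y}$ is an SSD point in $\Free(M)$. Since $(x,y)$ was arbitrary among pairs of distinct points, every molecule is an SSD point.

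There is no real obstacle here: all the work has already been done in proving Theorem \ref{thm:SSD1}, where the core geometric argument converts property (G) into the strong-subdifferentiability characterization (namely, that the set of norming functionals is strongly exposed by the molecule). The only nontrivial observation worth mentioning explicitly is that the constant $\eta(x,y)$ witnessing property (G) is allowed to depend on the pair $(x,y)$, which is perfectly acceptable because the SSD condition is a pointwise notion at each $m_{x,y}$ and does not require any uniformity across the molecules.

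Accordingly, I expect the full write-up to consist of at most one or two lines: fix $x \neq y$ in $M$, apply the hypothesis to obtain property (G) for $(x,y)$, and conclude via Theorem \ref{thm:SSD1}. No auxiliary lemma, no additional estimate, and no further appeal to the structure of $\Free(M)$ is needed beyond what has already been established.
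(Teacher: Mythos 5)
Your proposal is correct and matches the paper exactly: the paper states the corollary as a direct consequence of Theorem \ref{thm:SSD1} (with Proposition \ref{prop:characterization_G} supplying the peaking function used therein), which is precisely the two-line argument you describe. Your remark that the constant $\eta(x,y)$ may depend on the pair is a sensible clarification and introduces no gap, since SSD is indeed a pointwise property of each molecule.
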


As a matter of fact, the idea of the proof of Theorem \ref{thm:SSD1} will be important in a more refined way when we prove our main result, Theorem \ref{main1}.

\begin{remark}\label{rem:prop_G}
    The sufficient condition of having property (G) for a pair of points {does not} characterize SSD molecules. The examples below are provided to illustrate this fact.
    \begin{enumerate}
        \itemsep0.25em
        \item Let $M=\{x,y,0\}$ be a metric space of three points, with $0\in[x,y]$ and $x\not= y$. Then, $m_{x,y}$ (as every element of the space) is an SSD point, but clearly, the pair $(x,y)$ does not have the property (G), as $G_0(x,y)=0$.
        \item Let $M = \mathbb{N} \cup \{0\}$ with the usual metric inherited from $\mathbb{R}$. Then the linear operator from $\Free (M)$ to $\ell_1$ sending $\dt(n)$ to $\sum_{k=1}^n e_k$ for $n \in \mathbb{N}$ is a surjective isometry. In this case, every molecule $m_{x,y}$ with $x \neq y \in M$ is an SSD point in $\Free(M)$. However, it is easy to check that $m_{x,y}$ satisfies property (G) if and only if $|x-y|=1$. 
        \item Consider $\cantor$ to be the usual middle third Cantor set with the usual distance. The set $\cantor$ may be written as $\cantor=[0,1]\backslash \big( \bigcup_{n=1}^\infty I_n \big)$, where the sequence of intervals $I_n:=]a_n,b_n[$ is an enumeration of the middle thirds we are taking on each step( in particular, $I_1=]\frac{1}{3},\frac{2}{3}[$, $I_2= ]\frac{1}{9},\frac{2}{9}[$, $I_3=]\frac{7}{9}, \frac{8}{9}[$, etc.,). It is well known that $\Free (\cantor)$ is isometrically isomorphic to $\ell_1$. Observe that $m_{a_n, b_m}$ is an SSD point in $\Free(\cantor)$ if and only if $n=m$, while $m_{x,y}$ fails to have property (G) for any $x \neq y \in \cantor$. 
    \end{enumerate}
\end{remark}

\begin{remark} 
 Recall the metric space $M$ and the points $x \neq y \in M$ from Example \ref{Petr-example}. It was previously observed that $(x, y)$ fails property (G) (equivalently, thanks to Proposition \ref{prop:characterization_G}, there is no $f \in S_{\Lip(M)}$ that peaks at $(x, y)$). However, as we observed in Remark \ref{rem:prop_G}, the fact that $(x, y)$ lacks property (G) does not provide any information about whether $m_{x, y}$ is an SSD point in $\Free(M)$. 
 
We claim that, in this case, $m_{x,y}$ is indeed an SSD point in $\Free (M)$. To this end, let $\eps >0$ be given and let us find $n_0 \in \mathbb{N}$ such that $0< \eps_n < \eps$ for all $n > n_0$. We consider $M_0 := \{x=0, y, z_1,\ldots, z_{n_0}\}$. Take $0<\gamma<\eps$ sufficiently small so that 
whenever $g \in S_{\Lip (M_0)}$ satisfies that $g(m_{x,y})>1-\gamma$, there exists $h \in S_{\Lip (M_0)}$ so that $h(m_{x,y})=1$ and $\|h-g\| < \eps$. Now, suppose that $f \in S_{\Lip(M)}$ satisfies that $f(m_{x,y}) > 1-\gamma$. Then 
\begin{eqnarray*} 
1 = d(x,y) < \frac{f(x) - f(z_n) + f(z_n) - f(y)}{1-\gamma} 
&\leq& \frac{d(x,z_n) + d(z_n,y)}{1-\gamma} \\
&=& \frac{1+ \varepsilon_n}{1-\gamma} = 1 + \frac{\varepsilon_n + \gamma}{1 - \gamma}.
\end{eqnarray*}
Thus, we get that $d(z_n, y) - (f(z_n) - f(y))$ and $d(x,z_n)- ( f(x) - f(z_n))$ are smaller than or equal to $\varepsilon_n + \gamma$. This yields that 
 \begin{equation}
    f(m_{x,z_n}) \geq 1 - (2 \eps_n+2\gamma) \, \text{ and } \, f(m_{z_n,y}) \geq 1- \frac{2\eps_n+2\gamma}{1+2\eps_n}. \label{xzn2}
 \end{equation}
 Letting $\tilde{f} := \|f\vert_{M_0}\|^{-1} f \vert_{M_0}$, we have $\tilde{f}(m_{x,y})>1-\gamma$. Thus, there exists $h \in S_{\Lip(M_0)}$ such that $h(m_{x,y})=1$ and $\|h-\tilde{f}\| < \eps$. Extend $h$ to a norm one Lipschitz map on $M$ and denote it again by $h$. As before, we observe that 
   \begin{equation}
    h(m_{x,z_n}) \geq 1 - 2 \eps_n \, \text{ and } \, h(m_{z_n,y}) \geq 1- \frac{2\eps_n}{1+2\eps_n}. \label{xzn3}
 \end{equation}
 Finally, we estimate $\|h-f\|$ by dividing some cases. We use (\ref{xzn2}) and (\ref{xzn3}).
 \begin{enumerate}
     \itemsep0.25em
     \item $|(h-f)(m_{x,y})| < \gamma < \eps$.
     \item If $1 \leq n \leq n_0$, then 
     \[
     |(h-f)(m_{x,z_n})| \leq \| h \vert_{M_0} - \tilde{f} \| + |\|\tilde{f}\|-1| < \eps + \gamma < 2\eps. 
     \]
     Similarly, $|(h-f)(m_{z_n,y})| < 2 \eps$. 
     \item If $n > n_0$, then 
     \[
     |(h-f)(m_{x,z_n})| \leq 2\eps_n+2\gamma < 2 \eps + 2 \eps = 4 \eps.
     \]
     Similarly, $|(h-f)(m_{z_n,y})| \leq \frac{2\eps_n+2\gamma}{1+2\eps_n} < 4\eps$. 
     \item Let $i, j \in \mathbb{N}$. If $1\leq i,j \leq n_0$, then $|(h-f)(m_{z_i,z_j})| \leq \| h\vert_{M_0} - f \vert_{M_0}\| < 2 \eps$. Now suppose that $1\leq i \leq n_0$ and $j > n_0$. Let us recall that $d(z_i, z_j) =1$ and $d(x,z_i)=d(x,z_j)=1/2$. Using this and the previous cases, we get that
\begin{eqnarray*} 
     |(h-f)(m_{z_i,z_j})| &\leq& d(x,z_i)|(h-f)(m_{x,z_i})| + d(x,z_j) |(h-f)(m_{x,z_j})| \\
     &<& \frac{2\eps}{2}  + \frac{4\eps}{2}  = 3 \eps. 
\end{eqnarray*} 
     Similarly for $i, j > n_0$, we have $ |(h-f)(m_{z_i,z_j})| < 4 \eps$. 
 \end{enumerate}
Therefore, by using (1)--(4), we observe that $h \in S_{\Lip (M)}$ is such that $h(m_{x,y})=1$ and satisfies $\|h-f\| \leq 4\eps$. This shows that the molecule $m_{x,y}$ is an SSD point in $\Free (M)$ as we wanted to show.
\end{remark}


Let us now conclude this subsection by considering some relations between property (G), uniformly Gromov rotundness (see \cite{BH, CCGMRZ}), and uniformly concaveness. Let $M$ be a pointed metric space and $A$ a set of molecules in $M$. Recall that $A$ is \textit{uniformly Gromov rotund} if there exists $\delta = \delta(A) > 0$ such that 
\begin{equation*}
    G_z(x,y) > \delta \min \{d(x,z), d(y,z)\} 
\end{equation*}
whenever $m_{x,y} \in A$ and $z \in M \setminus \{x,y\}$. For simplicity, let us say that the pair $(x,y)$ is uniformly Gromov rotund if the singleton $\{m_{x,y}\}$ is uniformly Gromov rotund.

Also, recall that a metric space $M$ is called \textit{uniformly concave} if for every ${x \neq y \in M}$ and $\eps >0$ there exists $\delta >0$ such that 
$G_z (x,y) > \delta$ for every $z \in M$ satisfying $\min \{ d(x,z), d(y,z) \} \geq \eps$ (see \cite[Definition 3.3]{Weaver}). 
Let us say that a pair of points $(x,y)$ is uniformly concave if for any $\eps >0$, there exists $\delta = \delta(x,y,\eps)>0$ such that $G_z (x,y) > \delta$ for every $z \in M$ satisfying $\min \{ d(x,z), d(y,z) \} \geq \eps$.

As the definitions suggest, property (G), uniformly Gromov rotundness, and uniformly concaveness are interrelated. One key distinction between them is that the value $\eta=\eta(x,y)$ in the definition of property (G) for $(x,y)$ does not depend on the choice of a point $z \in M \setminus \{x,y\}$, whereas the lower bound for $G_z(x,y)$ in the definition of uniformly Gromov rotundness does depend on the choice of $z \in M \setminus \{x,y\}$. Additionally, in the definition of uniform concaveness of $(x,y)$, unlike the other two definitions, not all points $z \in M \setminus \{x,y\}$ are considered; only those $z \in M$ that satisfy a certain condition related to $\min \{d(x,z), d(y,z)\}$.



\begin{proposition} \label{propG2} Let $M$ be a pointed metric space. Let $x, y \in M$ be such that $x \not= y$. 
\begin{enumerate}
    \itemsep0.25em 
    \item If the pair $(x,y)$ satisfies property (G), then it is uniformly Gromov rotund. 
        \item If the pair $(x,y)$ is uniformly Gromov rotund, then it is uniformly concave. 
    \item If $x$ and $y$ are isolated points in $M$, then $(x,y)$ satisfies property (G) if and only if $(x,y)$ is uniformly Gromov rotund if and only if $(x,y)$ is uniformly concave.     
\end{enumerate}
\end{proposition}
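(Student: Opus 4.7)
The plan is to prove the three implications in sequence, with (1) carrying the only real content and (2), (3) being essentially bookkeeping.

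For (1), assume $(x,y)$ satisfies property (G) with constant $\eta = \eta(x,y)>0$, and set $r := d(x,y)$. For a given $z \in M \setminus \{x,y\}$, I would split on the size of $m := \min\{d(x,z),d(y,z)\}$ relative to $r$. When $m \leq r$, property (G) immediately yields $G_z(x,y) > \eta \geq (\eta/r)\,m$. When $m > r$, I would assume without loss of generality $d(x,z) \leq d(y,z)$, so $d(x,z) = m > r$, and use the inequality $G_z(x,y) = (d(x,z) - d(x,y)) + d(y,z) \geq d(y,z) \geq m$, which gives $G_z(x,y) \geq m$. Combining both cases, the choice $\delta := \min\{1/2,\, \eta/(2r)\}$ yields $G_z(x,y) > \delta m$ for every $z \in M \setminus \{x,y\}$, as required.

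For (2), the implication is essentially a matter of unwinding definitions: if $\delta(x,y)>0$ is the Gromov rotundness constant, then for any $\eps>0$ and any $z \in M$ with $\min\{d(x,z),d(y,z)\} \geq \eps$ we get $G_z(x,y) > \delta(x,y) \eps$, and the positive quantity $\delta(x,y)\eps$ serves as the required lower bound for the uniform concaveness of $(x,y)$ at scale $\eps$.

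For (3), in view of (1) and (2) it suffices to prove that uniform concaveness of $(x,y)$ implies property (G) when $x$ and $y$ are isolated points of $M$. The key observation is that isolation of $x$ and $y$ produces $\rho_x, \rho_y > 0$ with $d(x,z) > \rho_x$ for all $z \neq x$ and $d(y,z) > \rho_y$ for all $z \neq y$, and hence $\min\{d(x,z), d(y,z)\} \geq \rho := \min\{\rho_x,\rho_y\} > 0$ for every $z \in M \setminus \{x,y\}$. Applying the uniform concaveness of $(x,y)$ with $\eps := \rho$ then yields a single $\delta > 0$ such that $G_z(x,y) > \delta$ simultaneously for every such $z$, which is exactly property (G). I do not expect any real obstacle here; the main care point is just the case split with the correct choice of $\delta$ in (1).
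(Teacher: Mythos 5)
Your proposal is correct and follows essentially the same route as the paper: an elementary case analysis in (1) comparing $\min\{d(x,z),d(y,z)\}$ to $d(x,y)$ (the paper splits on whether either distance exceeds $d(x,y)$, but the resulting constant $\min\{1,\eta/d(x,y)\}$-type bound is the same), the definitional observation in (2), and the use of isolation radii to reduce uniform concaveness to property (G) in (3). No gaps.
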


\begin{proof} 
(1): Suppose that $M$ is any metric space and that $(x,y)$ satisfies property (G). Then, there exists $\eta = \eta(x,y) > 0$ such that $G_z(x,y) > \eta$ for every $z \in M \setminus \{x,y\}$. We have two cases. If $d(z,y) > d(x,y)$ or $d(x,z) > d(x,y)$, then we have that $G_z(x,y) > d(x,z)$ or $G_z(x,y) > d(y,z)$. In any case, we have that $G_z(x,y) > \min \{d(x,z), d(y,z)\}$. On the other hand, if $d(z,y)$ and $d(x,z)$ are both $\leq$ $d(x,y)$, then we have that 
\begin{align*}
G_z(x,y) > \eta &\geq \frac{\eta}{d(x,y)} \max \{ d(x,z), d(z,y) \}. 
\end{align*}
Putting together these cases, we get that 
\begin{equation*}
    G_z(x,y) > \min \left\{ 1, \frac{\eta}{d(x,y)} \right\} \min \{d(x,z), d(z,y)\} 
\end{equation*}
for every $z \in M \setminus \{x,y\}$. This shows that $\delta= \delta(\{x,y\}) := \min \left\{ 1, \frac{\eta}{d(x,y)} \right\}$ does the job.

(2): This is obvious by definition. 


(3): Suppose now that $x,y\in M$ are isolated and $(x,y)$ is uniformly concave. Take $r_x, r_y >0$ so that $B(x, r_x) = \{x\}$ and $B(y, r_y) = \{y\}$, respectively. Considering $\eps(x,y) := \min \{r_x, r_y\}$, we can find $\delta = \delta(x,y,\eps(x,y))>0$ such that $G_z (x,y) > \delta$ for every $z \in M \setminus \{x,y\}$ \, (noting that $\min \{d(x,z), d(y,z)\} \geq \min\{r_x,r_y\}$). This proves that $(x,y)$ has property (G).
\end{proof}

\begin{remark} 
\begin{enumerate}
        \itemsep0.25em
    \item The converse of (1) in Proposition \ref{propG2} is {\it not} true in general. Indeed, consider $M = \{x=0, y, z_1, z_2, \ldots \}$ endowed with the metric given by $d(x,y) = 1$, $d(x,z_n) = \frac{1}{2n}$, $d(y,z_n) = 1 - \frac{1}{4n}$ and $d(z_n, z_m) = \frac{1}{2n}+\frac{1}{2m}$ for every $n,m \in \N$ with $n\not=m$. On the one hand, we have that 
\begin{equation*}
    G_{z_n}(x,y) = \frac{1}{2n} + 1 - \frac{1}{4n} - 1 > \eps \, \min \{ d(x,z_n), d(y,z_n)\} 
\end{equation*}
for every $n \in \N$, whenever $0<\eps<\frac{1}{2}$. This shows that the pair $(x,y)$ is uniformly Gromov rotund, while $G_{z_n}(x,y)\rightarrow 0$ as $n\rightarrow\infty$; so $(x,y)$ fails property (G).
\item The converse of (2) does not generally hold as well. Recall that for $x \neq y \in M$, the pair $(x,y)$ is uniformly Gromov rotund if and only if the molecule $m_{x,y}$ is a strongly exposed point of $B_{\mathcal{F}(M)}$ \cite[Proposition 1.1 (b)]{CCGMRZ}. 
On the other hand, $(x,y)$ is uniformly concave if and only if $m_{x,y}$ is a preserved extreme point of $B_{\mathcal{F}(M)}$ \cite[Theorem 4.1]{AG} (which is equivalent to $m_{x,y}$ being a denting point of $B_{\mathcal{F}(M)}$ thanks to \cite[Theorem 2.4]{GPPR}). Finally, \cite[Example 6.4]{GPPR} shows that there is a (compact) metric space $M$ for which there exists a molecule $m_{x,y}$ which is a preserved extreme point but not a strongly exposed point of $B_{\mathcal{F}(M)}$. 
\end{enumerate}
\end{remark}

\subsection{The subset of SSD points is dense}
 

Let $M$ be a pointed metric space with metric $d$. One natural way of producing a locally uniformly non-aligned metric space from $M$ is to consider the metric space $(M,d_\gamma)$ with $\gamma >0$, where $d_\gamma : M \times M \rightarrow \R$ is given by $d_\gamma (x,y) := d(x,y) + \gamma$ for every $x, y \in M$ with $x \not=y$ and $d_\gamma (x,x) := 0$ for every $x \in M$. It is obvious from Corollary \ref{theorem:LUNA} that every molecule in $\Free (M, d_\gamma)$ is an SSD point. We have the following result, which will lead to the conclusion that every finite sum of molecules in $\Free (M, d_\gamma)$ is indeed an SSD point.

\begin{theorem} \label{main1} Let $(M,d)$ be a pointed metric space and $\gamma >0$. Then every element of 
\[
\left\{ \sum_{i=1}^n \lambda_i \left( \frac{\delta(x_i) - \delta(y_i) }{d(x_i, y_i) + \gamma} \right) : \exists f \in S_{\Lip(M,d)} : f(m_{x_i, y_i}) = 1,  \lambda_i >0, \sum_{i=1}^n \lambda_i = 1 \text{ and } n \in \mathbb{N}\right\} 
\]
is an SSD point in $\Free (M, d_{\gamma})$. 
\end{theorem}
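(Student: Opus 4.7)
My strategy is to lift the two-step argument of Theorem~\ref{thm:SSD1} (peaking function plus mixing) from a single molecule to the convex combination $\mu = \sum_{i=1}^n \lambda_i m^\gamma_{x_i, y_i}$, where I abbreviate $m^\gamma_{p,q} := \tfrac{\delta(p)-\delta(q)}{d_\gamma(p,q)}$. Observe first that the metric $d_\gamma$ is locally uniformly non-aligned: for distinct $p,q,z$ we have $G^{d_\gamma}_z(p,q) = G^d_z(p,q)+\gamma \geq \gamma$. Consequently every individual $m^\gamma_{x_i, y_i}$ is already an SSD point by Corollary~\ref{theorem:LUNA}; the real task is to handle the whole family simultaneously by producing a single $F \in B_{\Lip(M,d_\gamma)}$ which (i) satisfies $\langle F, m^\gamma_{x_i, y_i}\rangle = 1$ for every $i$, and (ii) strictly peaks there in the sense of Definition~\ref{def:peak}, with $|\langle F, m^\gamma_{p,q}\rangle| < \rho < 1$ for every other pair $(p,q)$.

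To construct $F$, I would start from the hypothesis function $f \in S_{\Lip(M,d)}$ and set $F(p) := f(p) + \gamma\, r(p)$ on the finite set $S := \{x_i, y_i\}_{i=1}^n$, where $r: S \to \R$ is a rank function chosen so that $r(x_i) - r(y_i) = 1$ for every $i$, monotone along the $f$-order, and with total variation at most $1$ on $S$. Monotonicity together with $|f(p)-f(q)| \leq d(p,q)$ then forces $|F(p)-F(q)| \leq d(p,q) + \gamma = d_\gamma(p,q)$ on $S$, while the rank condition delivers the tight equality $F(x_i)-F(y_i) = d_\gamma(x_i,y_i)$. After extending $F$ to all of $M$ via McShane's theorem, a small convex perturbation by $\sum_i \lambda_i f^\gamma_{x_i, y_i}$ (the $d_\gamma$-analogues of the auxiliary maps in Proposition~\ref{prop:characterization_G}, each peaking at its own pair in $(M, d_\gamma)$) upgrades (i) into the strict peaking property (ii) while preserving the attainment on the family $\{(x_i,y_i)\}$.

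With $F$ in hand, the closing argument is a verbatim analogue of the mixing in Theorem~\ref{thm:SSD1}: for any $g \in S_{\Lip(M,d_\gamma)}$ with $\langle g, \mu\rangle > 1 - \eta_\varepsilon$, where $\eta_\varepsilon$ is calibrated to both $\rho$ and $\varepsilon$, set $h := (1-\tfrac{\varepsilon}{4}) g + \tfrac{\varepsilon}{4} F$, read off from the strict peaking of $F$ that $\|h\|$ is attained on every $m^\gamma_{x_i,y_i}$, and conclude that $h/\|h\|$ lies in $D(\mu)$ with $\|g - h/\|h\|\| < \varepsilon$. The main obstacle is the existence of the rank function $r$: when the pairs chain up in the $f$-order (say $y_{i+1}=x_i$), the constraints $r(x_i)-r(y_i)=1$ are incompatible with any total-variation bound, and indeed in that case $\|\mu\|_{\Free(M,d_\gamma)} < 1$. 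I would resolve this via a preliminary telescoping step: such chained subconfigurations cause $\mu$ to collapse algebraically to a positive scalar multiple of a molecule $m^\gamma_{p,q}$ with $p,q$ the endpoints of the chain, and replacing chained pairs by these endpoint pairs produces an antichain family still satisfying the hypothesis, for which $r$ (and therefore $F$) does exist. Since the SSD property is invariant under positive scalar multiplication, applying the mixing argument to the reduced antichain representation completes the proof for the original $\mu$.
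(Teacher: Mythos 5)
Your overall strategy---produce one functional that simultaneously norms all the molecules $m^\gamma_{x_i,y_i}$ and peaks there, then mix as in Theorem~\ref{thm:SSD1}---matches the first half of the paper's argument (your $F$ plays the role of the function $G_\gamma$ built there), but the closing step has a genuine gap. Since $\mu=\sum_i\lambda_i m^\gamma_{x_i,y_i}$ is a convex combination, $D(\mu)=\bigcap_i D(m^\gamma_{x_i,y_i})$, so you must land in the \emph{intersection}. After setting $h=(1-\tfrac{\eps}{4})g+\tfrac{\eps}{4}F$, the peaking property of $F$ only yields that $\|h\|$ is attained at \emph{some} $m^\gamma_{x_i,y_i}$ (equivalently, that the norm of $h$ is computed on the finite set $N$ of points involved); it does not force $\langle h,m^\gamma_{x_i,y_i}\rangle=\|h\|$ for \emph{every} $i$. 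For instance, with $\lambda_1=\lambda_2=\tfrac12$, a $g$ with $\langle g,m^\gamma_{x_1,y_1}\rangle$ strictly smaller than $\langle g,m^\gamma_{x_2,y_2}\rangle=1$ is compatible with $\langle g,\mu\rangle>1-\eta_\eps$, and then $h$ norms only the second molecule, so $h/\|h\|\notin D(\mu)$. This is exactly where the paper inserts the ingredient your proposal lacks: having localized the norm of $h$ to $N$, it uses that the \emph{finite-dimensional} space $\Free(N,d_\gamma)$ is SSD at $\mu$ to replace $h|_N$ by a nearby $\varphi$ that genuinely norms $\mu$, and then glues $\varphi$ on $N$ with $h$ off $N$, verifying that the glued function still attains its norm at $\mu$. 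Without this localization-and-correction step the mixing argument does not close.

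Two secondary issues. First, your $F$ is required to norm each $m^\gamma_{x_i,y_i}$ exactly \emph{and} to satisfy $\sup\{|\langle F,m^\gamma_{p,q}\rangle|:(p,q)\notin\{(x_i,y_i)\}_i\}<\rho<1$. The perturbation by $\sum_i\lambda_i f^\gamma_{x_i,y_i}$ destroys the exact norming (each $f^\gamma_{x_j,y_j}$ evaluates strictly below $1$ at $m^\gamma_{x_i,y_i}$ for $i\neq j$, and by amounts depending on $i$), and for unbounded $M$ no McShane extension gives a uniform bound $\rho<1$ over far-away pairs: the paper needs the explicit cutoff $\xi(d(0,\cdot))$ precisely for this, and even then it only controls pairs meeting $M\setminus N$, leaving the remaining pairs inside $N$ to the finite-dimensional step. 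Second, the telescoping reduction is incorrect: if $y_2=x_1$, the sum $\lambda_1 m^\gamma_{x_1,y_1}+\lambda_2 m^\gamma_{x_2,y_2}$ does not collapse to a positive multiple of $m^\gamma_{x_2,y_1}$ unless the coefficients of $\delta(x_1)$ happen to cancel. What is true, and what the paper proves, is that a chained configuration is incompatible with the existence of a single $g\in S_{\Lip(M,d_\gamma)}$ norming all the molecules at once; this disjointness of $\{x_i\}_i$ and $\{y_i\}_i$ is then what legitimizes the definition of $f_\gamma$ (add $\gamma$ at each $x_i$, nothing at each $y_i$, $\tfrac{\gamma}{2}$ elsewhere), which is the correct incarnation of your rank function $r$.
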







\begin{proof} 
Let $\mu \in \Free (M, d_{\gamma})$ be an element of the following form 
\begin{equation*}
    \mu = \sum_{i=1}^{n} \lambda_i \left( \frac{\delta(x_i)-\delta(y_i)}{d(x_i,y_i)+\gamma} \right) 
\end{equation*}
where there exists $f \in S_{\Lip (M,d)}$ such that $f(x_i)-f(y_i)=d(x_i,y_i)$ for every $i=1,\ldots, n$ and $\sum_{i=1}^n \lambda_i = 1$ with $\lambda_i>0$. Let us notice that the sets $\{x_i\}_{i=1}^n$ and $\{y_i\}_{i=1}^n$ are disjoint. Indeed, let $g \in S_{\Lip (M, d_\gamma) }$ be such that $g(x_i)-g(y_i) = d(x_i,y_i) + \gamma$ for every $i = 1,\ldots, n$. Assuming for simplicity that $x_1 = y_2$, we have 
\begin{eqnarray*} 
g(x_2) = g(y_2) + d(x_2,y_2) + \gamma &=& g(y_1) + d(x_1,y_1) + d(x_2,x_1) + 2 \gamma \\
&\geq& g(y_1) + d(y_1, x_2) + 2\gamma.
\end{eqnarray*}
This yields a contradiction since 
\[
1 \geq \frac{g(x_2)-g(y_1)}{d(y_1,x_2)+\gamma} \geq \frac{d(y_1, x_2) + 2\gamma}{d(y_1, x_2)+\gamma} > 1.
\]
Setting $N:= \{0, x_1, y_1, \ldots, x_n, y_n \} \subseteq M$, it is clear that $f \vert_{N} \in S_{\Lip (N,d)}$ satisfies that $|f\vert_N| \leq \beta := \max \{ d(0, q) : q \in N \}$. Applying McShane extension theorem, we can extend $f \vert_N$ to $(M,d)$ whose values are contained in $[-\beta,\beta]$ without increasing its norm. We still denote this extension by $f$.

 \vspace{0.2cm}

Now let us define $f_\gamma \in \Lip (M, d_\gamma)$ as 
\begin{equation*} \label{modification1}
f_\gamma (z) :=
\begin{cases}
f(x_i) + \gamma, & \mbox{if} \ z = x_i, \, \forall i = 1,\ldots, n, \\
f(y_i), & \mbox{if} \ z = y_i, \, \forall i = 1,\ldots, n, \\
f(z) + \frac{\gamma}{2}, &  \mbox{if} \ z \not\in N.
\end{cases}
\end{equation*}
Observe that $\| f_\gamma \|_{\Lip(M,d_\gamma)} = 1$. As a matter of fact, for $1 \leq i, j \leq n$ and $p \not\in N$ we have 
\begin{enumerate}
    \itemsep0.25em
    \item $|f_\gamma (x_i) - f_\gamma(y_j)| = |f(x_i)+ \gamma - f(y_j)| \leq d(x_i, y_j)+\gamma$; 
    \item $|f_\gamma (x_i) - f_\gamma (p)| = | f(x_i) + \gamma - (f(p)+\frac{\gamma}{2}) |  \leq d(x_i,p) + \frac{\gamma}{2}$;  
    \item $|f_\gamma (y_j)-f_\gamma(p)| = |f(y_i) - (f(p)+\frac{\gamma}{2}) | \leq d(y_i, p) + \frac{\gamma}{2}$.  
\end{enumerate}
Thus, we have 
\begin{equation*}
    \langle f_\gamma, \mu \rangle = \sum_{i=1}^n \frac{\lambda_i}{d(x_i, y_i) + \gamma} (f(x_i) + \gamma - f(y_i)) = \sum_{i=1}^n \lambda_i = 1.
\end{equation*}
As it is clear that $\|\mu \|_{\Free (M,d_\gamma)} \leq 1$, we actually obtain that $\|\mu\|_{\Free (M,d_\gamma)} =1$.  
Let us note that 
\begin{equation}\label{eq:f_gamma}
    |f_\gamma(p)| \leq \beta + \frac{\gamma}{2} \, \text{ for every $p \not\in N$. }
\end{equation} 

\vspace{0.2cm} 
We also consider $\xi : \mathbb{R}_{\geq 0}\rightarrow [0,1]$ to be defined as 
    $\xi(t) = 1$ for $0\leq t \leq \beta$,
    \[
    \xi(t) = -\frac{1}{T- \beta } (t- T) \quad \text{for } \beta < t \leq T
    \]
    and $\xi(t) = 0$ for all $t \geq T$, where $T>0$ is sufficiently large (which will be specified later). Now, we define $G_\gamma \in \Lip(M,d_\gamma)$ given by 
    \begin{equation*} 
    G_\gamma (z) := f_\gamma(z) \, \xi ( d(0, z)  ) 
    \end{equation*} 
    for every $z \in M$. By using the definitions of $\xi$ and $f_\gamma$, we have immediately that $\langle G_\gamma, \mu \rangle=1$. For the remainder of the proof, and by abuse of notation, we shall denote by $m_{p,q}$ the elementary molecule $\frac{\delta(p) - \delta(q)}{d(p,q) + \gamma}$ in $\mathcal{F}(M, d_\gamma)$ for $p \neq q \in M$.

\vspace{0.25em}
    \textbf{Claim A}: $\sup \{ |\langle G_\gamma, m_{p,q} \rangle| : p \not\in N \text{ or } q \not\in N\} < 1$. 
    \vspace{0.25em}
    
In order to prove the claim, we consider different cases. Let $p \not\in N$ or $q \not\in N$ with $p \not= q$. If $d(0,p), d(0,q) \leq \beta$, then 
    \begin{align*}
        |\langle G_\gamma, m_{p,q} \rangle| &= \frac{|f_\gamma(p) \xi(d (0,p)) - f_\gamma(q) \xi(d (0,q))|}{d(p,q)+\gamma} \\
        &=  \frac{|f_\gamma(p)  - f_\gamma(q) |}{d(p,q)+\gamma} \\
        &\markthis{(a)}{b:1}{\leq}   \frac{d(p,q) + \frac{\gamma}{2}}{d(p,q) + \gamma} \markthis{(b)}{b:2}{\leq} \frac{2 \beta + \frac{\gamma}{2}}{2 \beta + \gamma} < 1,
   \end{align*}
   where the two inequalities \ref{b:1} and \ref{b:2} follow from the definition of $f_\gamma$ and the fact $d(p,q) \leq 2\beta$, respectively. 
Otherwise, we may (and we do) assume without loss of generality that $d(0,q)>\beta$ (in particular, $q \not\in N$). In this case, we have that
    \begin{align*}
        | \langle G_\gamma, m_{p,q} \rangle | &\leq \xi (d (0,p))  \frac{|f_\gamma(p)-f_\gamma(q)|}{d(p,q)+\gamma} + |f_\gamma(q)| \frac{|\xi (d(0,p))- \xi (d(0,q))|}{d(p,q) +\gamma} \\
        &\leq \frac{|f_\gamma(p)-f(q)-\frac{\gamma}{2}|}{d(p,q)+\gamma} + |f_\gamma(q)| \frac{|\xi (d(0,p))-\xi (d (0,q))|}{|d(0,p)-d(0,q)|} \frac{|d(0,p)-d(0,q)|}{d(p,q) +\gamma} \\
        &\stackrel{(\ref{eq:f_gamma})}{\leq} \frac{|f(p)-f(q)| +\frac{\gamma}{2}}{d(p,q)+\gamma}  + \left( \beta+ \frac{\gamma}{2} \right)  \left(  \frac{1}{T-\beta}\right)  \frac{d(p,q)}{d(p,q) +\gamma} =: (I)
    \end{align*}
We shall see that (I) is strictly smaller than $1$ uniformly for $p$ and $q$, provided that $T$ is sufficiently large. 
For simplicity, let us set  
    \[
    T_0 : = \frac{\gamma}{4} \left( \frac{T- \beta}{ \beta +\frac{\gamma}{2}} \right).
    \]
    At this moment, say $T>0$ is large enough so that 
\begin{equation}\label{bigT}
    \frac{2\beta+\frac{3\gamma}{2}}{T_0+\gamma} + \left( \beta+\frac{\gamma}{2} \right) \left( \frac{1}{T-\beta} \right) < \frac{3}{4}.
\end{equation}
We consider two cases. Suppose first that $d(p,q)<T_0$, which is equivalent to
\begin{equation}\label{eq:T_beta_delta}
\frac{\beta+\frac{\gamma}{2}}{T-\beta} < \frac{\gamma}{4} \frac{1}{d(p,q)}.
\end{equation}
By \eqref{eq:T_beta_delta}, we get that 
\begin{align*}
(I) &\leq \frac{d(p,q)+\frac{\gamma}{2}}{d(p,q)+\gamma} + \left(\beta+\frac{\gamma}{2}\right) \left(  \frac{1}{T-\beta}\right)  \frac{d(p,q)}{d(p,q) +\gamma} < \frac{d(p,q)+\frac{3\gamma}{4} }{d(p,q)+\gamma} < \frac{T_0 + \frac{3\gamma}{4}}{T_0 + \gamma} < 1.
\end{align*} 
On the other hand, if $d(p,q) \geq T_0$, then by \eqref{bigT} we get that 
\begin{align*}
    (I) 
    &\leq \frac{2\beta+\frac{3\gamma}{2}}{T_0 + \gamma} + \left(\beta+\frac{\gamma}{2} \right) \left( \frac{1}{T-\beta} \right) < \frac{3}{4}. 
\end{align*}
This completes the proof of the claim. Thus, we can find $K>0$ sufficiently large such that the following holds true
\[
 \sup \{ |\langle G_\gamma, m_{p,q} \rangle |: p \not\in N \text{ or } q \not\in N \} < \frac{K+\frac{\gamma}{2}}{K+\gamma} < 1. 
\]

Let us now move towards the end of the proof. Let $\varepsilon > 0$ be given and $g \in S_{\Lip(M, d_\gamma)}$ be such that $\langle g, \mu \rangle > 1 - \rho$, where $\rho = \rho(\varepsilon, \mu) > 0$ is small enough such that\footnote{Let us notice that (\ref{parameter1}) is indeed possible to be considered since 
\begin{equation*}
(1 - \sqrt{\varepsilon}) + \sqrt{\varepsilon} \left( \frac{ K + \frac{\gamma}{2}}{K + \gamma} \right) +  \frac {\beta \varepsilon}{\gamma}
= 1 - \sqrt{\varepsilon} \left(1 - \frac{K+ \frac{\gamma}{2}}{K + \gamma} - \frac{\beta \sqrt{\varepsilon}}{\gamma} \right) 
\end{equation*}
is smaller than $1$ for a sufficiently small $\varepsilon >0$, and then we can take $\rho=\rho(\varepsilon,\mu) > 0$ small enough in order to satisfy (\ref{parameter1}).}
\begin{equation} \label{parameter1}
(1 - \sqrt{\varepsilon}) + \sqrt{\varepsilon} \left( \frac{K + \frac{\gamma}{2}}{K + \gamma} \right) +  \frac {\beta \varepsilon}{\gamma} < (1 - \sqrt{\varepsilon})(1 - \rho) + \sqrt{\varepsilon}.
\end{equation}
\vspace{0.25em}

\textbf{Claim B}: $\exists\, \psi \in S_{\Lip (M, d_\gamma)}$ such that $\langle \psi, \mu \rangle = 1$ and $\| \psi - g\| \approx 0$.
\vspace{0.25em}

First, define $h: (M, d_\gamma) \rightarrow \R$ by 
\begin{equation*} \label{function1}
    h:= (1 - \sqrt{\varepsilon})g + \sqrt{\varepsilon} \,G_\gamma.
\end{equation*}
Then, we have that $\langle h, \mu \rangle > (1 - \sqrt{\varepsilon})(1 - \rho) + \sqrt{\varepsilon}$ and 
\begin{equation*}
    |\langle h, m_{p,q} \rangle| \leq (1 - \sqrt{\varepsilon}) + \sqrt{\varepsilon} \left( \frac{K + \frac{\gamma}{2}}{K + \gamma}\right)
\end{equation*}
for every $p\not\in N$ or $q \not\in N$. This implies that 
\begin{eqnarray*}
\langle h, \mu \rangle &>& (1 - \sqrt{\varepsilon})(1 - \rho) + \sqrt{\varepsilon} \\
&\stackrel{(\ref{parameter1})}{>}& (1 - \sqrt{\varepsilon}) + \sqrt{\varepsilon} \left( \frac{K + \frac{\gamma}{2}}{K + \gamma} \right) +  \frac {\beta \varepsilon}{\gamma} \\
&>&  \sup \{ | \langle h, m_{p,q} \rangle| : p \not\in N \text{ or } q \not\in N \}.
\end{eqnarray*}
In particular, this shows that $\|h\| = \| h \vert_N \|$ with $\langle h|_N, \mu \rangle > (1 - \sqrt{\varepsilon})(1 - \rho) + \sqrt{\varepsilon}$.  Consider $\mu$ as an element of the finite-dimensional space $\Free(N, d_\gamma)$, and recall that the norm of a finite-dimensional normed space is SSD. Thus, by letting $\rho=\rho(\eps,\mu)$ even smaller if necessary, we can find a Lipschitz function $\varphi \in \Lip(N, d_\gamma)$ such that $\|\varphi\| = \langle \varphi, \mu \rangle = \|h\|$ and $\|\varphi - h|_N\|_{\Lip(N, d_\gamma)} < \varepsilon$. Now, we define $\psi: (M, d_\gamma) \rightarrow \R$ by 
\begin{equation*}
\psi(p) :=
\begin{cases}
\varphi(p), & \forall p \in N, \\
h(p), & \forall p \in M \setminus N
\end{cases}
\end{equation*}
which will serve as the required element for our claim.

Notice that $\langle \psi, \mu \rangle = \langle \varphi, \mu \rangle = \|\varphi\|$. Moreover, $\|\psi - h\|_{\Lip(M, d_\gamma)}$ can be estimated as follows. If $p, q \in N$, then 
\begin{equation*}
    \frac{|(\psi - h)(p) - (\psi-h)(q)|}{d(p,q)+\gamma} = \frac{|(\varphi - h|_N)(p) - (\varphi-h|_N)(q)|}{d(p,q)+\gamma} < \varepsilon.
\end{equation*}
 On the other hand, if $p \in M \setminus N$ and $q \in N$, then 
 \begin{align*}
    \frac{|(\psi - h)(p) - (\psi-h)(q)|}{d(p,q)+\gamma} &= \frac{|(h-h)(p) - (\varphi - h)(q)|}{d(p,q)+\gamma} \\
    &\leq   \| \varphi - h \vert_N \|_{ \Lip(N, d_\gamma) } \, \frac{ \beta }{ \gamma} < \frac{\beta \eps }{\gamma}. 
 \end{align*}
Finally, the case when $p,q \in M \setminus N$ is trivial. Consequently, we get that 
\begin{equation*}
    \| \psi - g\|_{\Lip(M, d_\gamma)} \leq \| \psi - h \|_{\Lip(M, d_\gamma)} + \|h-g\|_{\Lip(M, d_\gamma)} < \max\left\{ \varepsilon, \frac{\beta\eps}{\gamma}\right\} + 2\sqrt{\varepsilon}. 
\end{equation*}

\vspace{0.2cm}

It remains to prove that $\psi$ attains its norm at $\mu$. To this end, observe that if $p,q \in M \setminus N$, then 
\begin{equation*}
    |\langle \psi, m_{p,q}\rangle| = | \langle h, m_{p,q} \rangle| \leq \|h\| = \|\varphi\| =\langle \psi, \mu \rangle. 
\end{equation*}
If $p,q \in N$, then 
\begin{equation*}
| \langle \psi, m_{p,q} \rangle| = | \langle \varphi, m_{p,q} \rangle| \leq \|\varphi\| = \langle \psi, \mu \rangle. 
\end{equation*} 
Finally, if $p \in N$ and $q \in M \setminus N$, then 
\begin{eqnarray*}
 | \langle \psi, m_{p,q} \rangle| = \frac{| \psi(p) - \psi(q) | }{d(p,q) + \gamma} &\leq& \frac{|\varphi(p) - h(p)| + |h(p) - h(q)|}{d(p,q) + \gamma} \\
 &\leq& \frac{ \|\varphi - h|_N \|_{\Lip(N, d_\gamma)} \, \beta}{\gamma}  + |\langle h, m_{p,q} \rangle| \\
 &<& \frac {\beta \varepsilon}{\gamma} + (1 - \sqrt{\epsilon}) + \sqrt{\varepsilon} \left( \frac{K + \frac{\gamma}{2}}{K + \gamma} \right) \\
 &\stackrel{(\ref{parameter1})}{<}& (1 - \sqrt{\varepsilon})(1 - \rho) + \sqrt{\varepsilon} \\
 &<& \|h\| = \|\varphi\| = \langle \psi, \mu \rangle.
\end{eqnarray*}
This completes the proof of the claim. Therefore, we conclude that $\mu$ is an SSD point in $\Free(M, d_\gamma)$ as we wanted to prove.
\end{proof}

Now we are ready to prove the denseness of the SSD points in $\Free (M, d_\gamma)$ for every $\gamma>0$. This is a consequence of Theorem \ref{main1}.

\begin{corollary} \label{main}
    Let $(M,d)$ be a pointed metric space and $\gamma >0$. Then every finite sum of molecules in $\Free (M, d_\gamma)$ is an SSD point. In particular,
 the set of SSD points in $\Free (M, d_\gamma)$ is dense. 
\end{corollary}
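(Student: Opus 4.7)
The plan is to derive the corollary from Theorem~\ref{main1} by a rescaling argument. Let $\mu$ be a nonzero finite sum of molecules in $\Free(M, d_\gamma)$. Since $\mu$ is a finite linear combination of Dirac masses, the same formal expression also defines an element of $\Free(M, d)$. Applying Weaver's optimal representation \cite[Proposition~3.16]{Weaver} to $\mu$ viewed inside $\Free(M, d)$, I would obtain
\[
\mu = \sum_{i=1}^n \lambda_i\, \frac{\delta(x_i) - \delta(y_i)}{d(x_i, y_i)}, \qquad \lambda_i > 0, \qquad \sum_{i=1}^n \lambda_i = \|\mu\|_{\Free(M, d)}.
\]
Since $\mu$ is finitely supported, the $w^*$-compactness of $B_{\Lip(M, d)}$ yields $f \in S_{\Lip(M, d)}$ attaining $\langle f, \mu \rangle = \|\mu\|_{\Free(M, d)}$, and combined with the optimality of the decomposition, this forces $f(x_i) - f(y_i) = d(x_i, y_i)$ for every $i = 1, \ldots, n$.

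Next I would re-express the decomposition in terms of $d_\gamma$-molecules by multiplying and dividing each term by $d(x_i, y_i) + \gamma$: setting $\beta_i := \lambda_i (d(x_i, y_i) + \gamma)/d(x_i, y_i) > 0$ and $c := \sum_i \beta_i > 0$, one has
\[
\mu = \sum_{i=1}^n \beta_i\, \frac{\delta(x_i) - \delta(y_i)}{d(x_i, y_i) + \gamma},
\]
so that $\tilde{\mu} := c^{-1}\mu$ is a positive convex combination of $d_\gamma$-molecules with coefficients summing to $1$ and sharing the same pairs $(x_i, y_i)$. The Lipschitz function $f$ above is still a common norming functional in $S_{\Lip(M, d)}$ for these pairs, so $\tilde{\mu}$ belongs to the set that Theorem~\ref{main1} declares to consist of SSD points. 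Hence $\tilde{\mu}$ is an SSD point, and since the SSD property is invariant under positive rescaling (seen by applying the change of variable $s = t/c$ in the one-sided limit defining SSD), so is $\mu = c\tilde{\mu}$. The density statement then follows from the classical fact that the finitely supported elements are norm-dense in $\Free(M, d_\gamma)$.

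The main obstacle, in my view, is ensuring that the common norming Lipschitz functional lies in $S_{\Lip(M, d)}$ rather than merely in $S_{\Lip(M, d_\gamma)}$. Decomposing $\mu$ optimally directly inside $\Free(M, d_\gamma)$ would only furnish a common norming $g \in S_{\Lip(M, d_\gamma)}$, and the natural attempt to convert it to a candidate in $\Lip(M, d)$ by subtracting $\gamma$ at each $x_i$ generally fails, because the Lipschitz bound between two positive-mass endpoints $x_i, x_j$ can deteriorate by as much as $\gamma$. Performing the optimal decomposition inside the auxiliary space $\Free(M, d)$ circumvents this difficulty: duality with $\Lip(M, d)$ furnishes an $f$ that is automatically $1$-Lipschitz with respect to $d$ by construction, and the subsequent passage back to $d_\gamma$-molecules is then a purely arithmetic rescaling.
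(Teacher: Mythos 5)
Your route is genuinely different from the paper's. You take an optimal representation of $\mu$ in the auxiliary space $\Free(M,d)$ and then rescale the coefficients, whereas the paper writes $d_\gamma=(d_{\gamma/2})_{\gamma/2}$, takes the optimal representation of $\mu$ inside the target space $\Free(M,d_\gamma)$ itself, and converts the resulting norming functional $f\in S_{\Lip(M,d_\gamma)}$ into one in $S_{\Lip(M,d_{\gamma/2})}$ by subtracting $\gamma/2$ at each $x_i$ --- exactly the subtraction you correctly observe must fail if one tries to land directly in $\Lip(M,d)$, but which succeeds one level up because $d_{\gamma/2}$ still leaves $\gamma/2$ of slack between any two distinct points. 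Your reduction is consistent with the literal statement of Theorem~\ref{main1}, and with one additional ingredient it would work; but as written there is a gap.

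The gap: Weaver's optimal representation in $\Free(M,d)$ does not guarantee that the sets $\{x_i\}$ and $\{y_j\}$ are disjoint, and the proof of Theorem~\ref{main1} genuinely needs this disjointness --- its first step derives it from a common norming functional $g\in S_{\Lip(M,d_\gamma)}$ for the $d_\gamma$-molecules, which your construction does not supply (you only supply the $d$-norming functional $f$), and disjointness is what makes the auxiliary function $f_\gamma$ well defined and forces $\|\tilde\mu\|_{\Free(M,d_\gamma)}=1$. Concretely, take $M=\{0,1,2\}\subseteq\R$ and $\mu=\delta(2)-\delta(0)$: the representation $\mu=\frac{\delta(2)-\delta(1)}{d(2,1)}+\frac{\delta(1)-\delta(0)}{d(1,0)}$ is optimal in $\Free(M,d)$ and is normed by $f(t)=t$, yet $y_1=x_2=1$, the function $f_\gamma$ of Theorem~\ref{main1} cannot be defined, and your $\tilde\mu=\frac12\frac{\delta(2)-\delta(1)}{1+\gamma}+\frac12\frac{\delta(1)-\delta(0)}{1+\gamma}$ has $\Free(M,d_\gamma)$-norm $\frac{2+\gamma}{2+2\gamma}<1$, so it is not even a unit vector and lies outside what the proof of Theorem~\ref{main1} actually establishes (that proof shows every element it treats has norm one). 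The paper's detour through $d_{2\gamma}$ exists precisely to secure, for the same pairs, both a $d_\gamma$-norming functional (hence disjointness and norm one) and a $d$-norming functional. Your argument can be repaired by choosing the optimal representation in $\Free(M,d)$ so that the $x_i$ lie in the positive part and the $y_j$ in the negative part of the Jordan decomposition of $\mu$ (an optimal transport coupling), which restores disjointness and, via the $f_\gamma$ construction, norm one; but that step is missing from what you wrote, and plain citation of \cite[Proposition~3.16]{Weaver} does not provide it.
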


\begin{proof} Let $\gamma > 0$ be arbitrary. For simplicity, let us put $\tilde{d} := d_{\gamma}$ (then,  $\tilde{d}_{\gamma} = d_{2\gamma}$). Let $\mu \in \Free(M, \tilde{d}_{\gamma})$ be a norm-one finitely supported element having an optimal representation and write 
\begin{equation*}
    \mu = \sum_{i=1}^n \lambda_i \frac{\delta(x_i) - \delta(y_i)}{\tilde{d}(x_i, y_i) + \gamma} \in \Free(M, \tilde{d}_{\gamma})
\end{equation*}
with $\sum_{i=1}^n \lambda_i = \|\mu\|_{\Free(M, \tilde{d}_{\gamma})} = 1$. Take $f \in S_{\Lip(M, \tilde{d}_{\gamma})}$ to be such that 
\begin{equation*} \label{dense-SSD-expression-1}
{f}(x_i) - f(y_i) = \tilde{d}(x_i, y_i) + \gamma, \ \forall \ i=1,\ldots, n.
\end{equation*}
Arguing as in the proof of Theorem \ref{main1}, observe that the sets $\{x_i\}_{i=1}^n$ and $\{y_i\}_{i=1}^n$ are disjoint. 
 Now, we consider the finite set $N:=\{x_1, y_1, \ldots, x_n, y_n \}$ and assume, without loss of generality, that $0 \in \{y_1,\ldots, y_n\}$.

 Define $\tilde{f}:N \rightarrow \R$ by $\tilde{f}(x_i):=f(x_i) - \gamma$ and $\tilde{f}(y_i):=f(y_i)$ for every $i=1,\ldots, n$. We claim that $\tilde{f}$ is $1$-Lipschitz in $\Lip(N, \tilde{d})$. For this, let $i,j \in \{1,\ldots, n\}$. We split the argument into two cases. Suppose first that $\tilde{f}(x_i) - \tilde{f}(y_j) \geq 0$. Then, we have that 
\begin{equation*}
    |\tilde{f}(x_i) - \tilde{f}(y_j)| = f(x_i) - f(y_j) - \gamma \leq \tilde{d}(x_i, y_j)
\end{equation*}
and we are done. Let us suppose now that $\tilde{f}(x_i) - \tilde{f}(y_j) < 0$. Then 
\begin{align*}
|\tilde{f}(x_i) - \tilde{f}(y_j)| = \tilde{f}(y_j) - \tilde{f}(x_j) + (\tilde{f}(x_j) - \tilde{f}(x_i)) &= - \tilde{d}(y_j, x_j) + f(x_j) - f(x_i) \\
&\leq - \tilde{d}(y_j, x_j) + \tilde{d}(x_j, x_i) + \gamma \stackrel{(*)}{\leq} \tilde{d}(x_i, y_j),
\end{align*}
where the last inequality ($*$) holds by the definition of $\tilde{d} = d_\gamma$.

According to McShane's theorem, we can extend $\tilde{f}$ to a norm one element in $\Lip (M, \tilde{d})$. Let us denote this extension once again by $\tilde{f}$.  By Theorem \ref{main1}, applied to the metric space $(M, \tilde{d})$ and $\mu \in \mathcal{F}(M, \tilde{d}_\gamma)$, we can conclude that $\mu$ is an SSD point in $\Free(M, \tilde{d}_{\gamma})$ (noting that $\tilde{f}(x_i)-\tilde{f}(y_i) = \tilde{d}(x_i,y_i)$ for every $i=1,\ldots, n$). In other words, given $\mu \in \Free(M, d_{2\gamma})$ with an optimal representation, we proved that $\mu$ is an SSD point in $\Free(M, d_{2\gamma})$. 

Since $\gamma > 0$ was chosen arbitrarily, we conclude the result, and the denseness result follows directly. 
\end{proof}

\begin{remark}
    Suppose that we start with the metric space $(M, d)$ and $\gamma >0$ such that $d_{-\gamma} : M \times M \rightarrow \mathbb{R}$ given by $d_{-\gamma} (x,y) := d(x,y) - \gamma$ for $x \neq y \in M$ and $d_{-\gamma}(x,x):=0$ is a metric on $M$. Then Corollary \ref{main} yields that every finitely supported element in $\mathcal{F}(M,d)$ is an SSD point. However, the assumption that $d_{-\gamma}$ is a metric imposes that the original metric space $(M, d)$ satisfies $\inf_{z \in M \setminus \{x, y\}} G_z(x, y) \geq \gamma$ for $x \neq y \in M$ (i.e., $(M, d)$ is locally uniformly non-aligned with uniform constant $\gamma$). Additionally, $(M, d)$ must be uniformly discrete since $d(x, y) > \gamma$ for every $x \neq y \in M$.   
\end{remark}

Note that if $(M,d)$ is uniformly discrete, then the identity map between $(M,d)$ and $(M, d_\gamma)$ turns to be bi-Lipschitz for every $\gamma>0$. In this regard, the following result is a straightforward consequence of Corollary \ref{main}.

\begin{corollary}\label{cor:biLipschitz}
    Let $M$ be a uniformly discrete metric space and $\eps >0$ be given. Then there exists a metric space $N$ and a bi-Lipschitz map $\phi : M \rightarrow N$
    with {$1\leq \|\phi\|_{\text{\rm Lip}} \leq 1+\eps$} such that the set of SSD points in $\Free (N)$ is dense. 
\end{corollary}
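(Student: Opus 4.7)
\medskip
\noindent\textbf{Proof proposal.} The plan is to realize $N$ simply as $(M, d_\gamma)$ for a judiciously small $\gamma > 0$, and to take $\phi$ to be the set-theoretic identity map. The whole content of the corollary is then the verification that (i) choosing $\gamma$ proportional to the discreteness gap of $M$ controls the Lipschitz constant of $\phi$, and (ii) the density conclusion follows directly from Corollary \ref{main}.

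Concretely, first I would set $\theta := \inf \{d(x,y) : x \neq y \in M\}$, which is strictly positive by the uniform discreteness hypothesis on $M$. Then I would fix any $\gamma \in (0, \varepsilon \theta]$ and define $N := (M, d_\gamma)$, where $d_\gamma$ is the metric introduced in the paragraph before Theorem \ref{main1}. Taking $\phi : M \to N$ to be the identity set map, for every pair $x \neq y$ in $M$ one has
\[
1 \;\leq\; \frac{d_\gamma(x,y)}{d(x,y)} \;=\; 1 + \frac{\gamma}{d(x,y)} \;\leq\; 1 + \frac{\gamma}{\theta} \;\leq\; 1 + \varepsilon.
\]
This immediately yields $1 \leq \|\phi\|_{\text{\rm Lip}} \leq 1 + \varepsilon$ and $\|\phi^{-1}\|_{\text{\rm Lip}} \leq 1$, showing that $\phi$ is the desired bi-Lipschitz map.

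Finally, I would invoke Corollary \ref{main} applied to the metric space $(M, d)$ with the chosen $\gamma > 0$: every finite sum of molecules of $\Free(N) = \Free(M, d_\gamma)$ is an SSD point. Since finitely supported elements are norm dense in $\Free(N)$ (by the standard density of the linear span of molecules, combined with the optimal representation recalled in the Preliminaries), density of SSD points in $\Free(N)$ follows at once. There is no real obstacle here beyond the bookkeeping of the constant $\gamma = \varepsilon \theta$; the crux of the argument is already contained in Theorem \ref{main1} and Corollary \ref{main}.
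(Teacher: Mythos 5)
Your proposal is correct and follows exactly the paper's intended route: the paper's remark preceding the corollary says precisely that one takes $N=(M,d_\gamma)$ with $\phi$ the identity (bi-Lipschitz since $M$ is uniformly discrete) and invokes Corollary \ref{main}. Your explicit choice $\gamma\leq\eps\,\theta$ with $\theta=\inf\{d(x,y):x\neq y\}$ is the right quantification of the constant, and the density step via finitely supported elements is exactly what Corollary \ref{main} provides.
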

 
\vspace{0.2cm} 
 
\noindent 
\textbf{Acknowledgements:} This paper was done while the first, the second and the fourth authors were visiting the Department of Mathematics of the Faculty of Electrical Engineering at the Czech Technical University in Prague. They are thankful for the support they have received there.  They would also like to thank Ramón J. Aliaga, Rubén Medina, Rafael Payá and Andrés Quilis for fruitful conversations and for providing relevant references about the topic of the paper, as well as anonymous referees for their very careful reading of our manuscript and the valuable suggestions.


\vspace{0.2cm} 
 \noindent 
\textbf{Funding information}: 

\vspace{0.2cm}
\noindent
{\it Christian Cobollo} was supported by 
\begin{itemize}
\itemsep0.25em
\item[(1)] The grants PID2021-122126NB-C33, PID2019-105011GB-I00 and PID2022-139449NB-I00 funded by 
MICIU/AEI/10.13039/501100011033 and by ERDF/EU,
\item[(2)] Generalitat Valenciana (through Project PROMETEU/2021/070 and the predoctoral contract CIACIF/2021/378). 

\end{itemize}

\vspace{0.2cm}
\noindent
{\it Sheldon Dantas} was supported by 
\begin{itemize} 
\itemsep0.25em
\item[(1)] The Spanish AEI Project PID2019-106529GB-I00/AEI/10.13039/501100011033,
\item[(2)]  Generalitat Valenciana project CIGE/2022/97 and 
\item[(3)] The grant PID2021-122126NB-C33 funded by 
MICIU/AEI/10.13039/501100011033 and by ERDF/EU.
\end{itemize} 
\vspace{0.2cm}
\noindent
{\it Petr Hájek} was supported by 
\begin{itemize}
\itemsep0.25em
    \item[(1)] GA23-04776S and 
    \item[(2)] SGS24/052/OHK3/1T/13 of CTU in Prague.
\end{itemize} 

\vspace{0.2cm}
\noindent
{\it Mingu Jung} was supported by 
June E Huh Center for Mathematical Challenges (HP086601) at Korea Institute
for Advanced Study.

 \end{document}